    \theoremstyle{plain}
    \newtheorem{thm}{Theorem}[section] \newtheorem{cor}[thm]{Corollary}
    \newtheorem{lem}[thm]{Lemma}  \newtheorem{prop}[thm]{Proposition}
     \newtheorem {example}[thm]{Example}
    \numberwithin{equation}{section}
\begin{document}
\newcommand{\Li}{\mathscr{L}}
\newcommand{\Qp}{\mathbf{Q}_{p}}
\title[]{$\Li$-invariants and logarithm derivatives of eigenvalues of Frobenius}

\author{Zhang Yuancao}
\address{Beijing International Center for Mathematical Research, Beijing, 100871, China}
\email{zhangyuancao@pku.edu.cn}

\maketitle


\newcommand{\A}{\mathbf{A}}
\newcommand{\GK}{\mathscr{G}_{K}}
\newcommand{\Zp}{\mathbf{Z}_{p}}
\newcommand{\Hk}[2]{H^{#1}({#2})}
\newcommand{\LKz}{L\otimes_{\Qp}K_{0}}

\newcommand{\LK}{L\otimes_{\Qp}K}
\newcommand{\OK}{\mathscr{O}_{K}}

\newcommand{\X}{\mathscr{X}}
\newcommand{\Nm}{\mathrm{Nm}}
\newcommand{\hotimes}{\hat{\otimes}}
\newcommand{\BHT}{\mathbf{B}_{\mathrm{HT}}}
\newcommand{\BdR}{\mathbf{B}_{\mathrm{dR}}}
\newcommand{\Bcris}{\mathbf{B}_{\mathrm{cris}}}
\newcommand{\Bst}{\mathbf{B}_{\mathrm{st}}}
\newcommand{\MFad}{\mathbf{MF}^{\mathrm{ad}}_{K,L}(\varphi, N)}
\newcommand{\Xst}{\mathbf{X}_{\mathrm{st}}}
\newcommand{\Vst}{\mathbf{V}_{\mathrm{st}}}
\newcommand{\Dst}{\mathbf{D}_{\mathrm{st}}}
\newcommand{\Dcris}{\mathbf{D}_{\mathrm{cirs}}}
\newcommand{\XdR}{\mathbf{X}_{\mathrm{dR}}}
\newcommand{\Repst}{\mathbf{Rep}^{\mathrm{st}}_{L}(\GK)}
\begin{abstract}
Let $K$ be a $p$-adic local field. In this work we study a special kind of $p$-adic Galois representations of it. These representations are similar to the Galois representations occurred in the exceptional zero conjecture for modular forms. In particular, we verify that a formula of Colmez can be generalized to our case. We also include a degenerated version of Colmez's formula. 
\end{abstract}

\section{Introduction}

Let $f$ be a level $\Gamma_{0}(N)$ ($p \parallel N$) newform of even weight $k \geq 2$. Let $L(f,s)$ be the classical $L$-function associate to it. Denote the $p$-adic $L$-function associated to $f$ by $L_{p}(f,s)$. In \cite{MTT}, Mazur, Tate and Teitelbaum noted that $L_{p}(f,s)$ has one more trivial zero than $L(f,s)$ at the point $s=k/2$. Let $\rho_{f}$ be the Galois representation of $\mathscr{G}_{\mathbf{Q}_{p}}$ associated to $f$. They conjectured further the ratio $\frac{L_{p}'(f,k/2)}{L(f,k/2)}$ depends only on $\rho_{f}$. The conjecture is called the exceptional zero conjecture.

For $k=2$ case, Mazur, Tate and Teitelbaum gave a conjectural local description of this constant $\mathscr{L}$. The conjecture in this case was proved by Greenberg and Stevens \cite{GS} . A key ingredient of their proof is Hida's family of $p$-adic ordinary Hecke eigenforms. For any weight $2$ newform $f$, there is one such family containing $f$. Let  $\alpha$ be the function of $U_{p}$-eigenvalues of the forms on the family. By some cohomology computation of the deformation of $\rho_{f}$, they proved the following formula:
\begin{gather}\label{eqn:stevens}
\mathscr{L} = -2\frac{\alpha'(f)}{\alpha(f)}
\end{gather}
for the $\mathscr{L}$-invariant constructed by Mazur, Tate and Teitelbaum. On the other hand, they used the family to construct a two variable $p$-adic $L$-function. The two variable $p$-adic $L$-function  was used to prove that 
\begin{gather}\label{eqn:gs1}
\frac{L'_{p}(f,k/2)}{L(f,k/2)} = -2\frac{\alpha'(f)}{\alpha(f)}.
\end{gather} 

For a general $k$, there are several ways to construct the local invariant $\mathscr{L}$. One is provided by Fontaine using his theory of semi-stable representations (see \cite{M}). Replacing Hida's families by Coleman's family of $p$-adic Hecke eigenforms, the same strategy in the proof of the formula (\ref{eqn:gs1}) applies to the general weight case. So to prove the exceptional zero conjecture, it remains to generalize the formula (\ref{eqn:stevens}). This was accomplished in \cite{G} by Stevens for the $\mathscr{L}$-invariants constructed by Coleman.  Inspired by this, Colmez \cite{C} proved a formula for Fontaine's $\mathscr{L}$-invariant. Applying this formula to Coleman's families, he proved
that (\ref{eqn:stevens}) also holds for Fontaines' $\mathscr{L}$-invariant. The key tool of Colmez's proof is the fundamental exact sequence of $p$-adic Hodge theory. Using it, one can carry out cohomology computation on the level of $\mathbf{X}_{\mathrm{st}}$ like the weight $2$ case. In this paper, we generalize the aforementioned formula of Colmez to general $p$-adic local fields. 

Before stating the main result, we introduce some notation first.

Let $K$ be a fixed extension of $\mathbf{Q}_{p}$ of degree $n$. Denote the maximal unramified extension of $\mathbf{Q}_{p}$ contained in $K$ by $K_{0}$.  Let $e$ and $f$ be the ramification index and the inertia degree of $K$, respectively. Denote the absolute Galois group of $K$ by $\mathscr{G}_{K}$. Let $I_{K}$ be the inertia subgroup of $\mathscr{G}_{K}$.  Denote the abelianization of $\mathscr{G}_{K}$ by $\mathscr{G}_{K}^{\mathrm{ab}}$. Let $I_{K}^{\mathrm{ab}}$ be the image of $I_{K}$ in $\mathscr{G}_{K}^{\mathrm{ab}}$. Throughout this note, the cohomology groups mean the continuous group cohomology of $\mathscr{G}_{K}$ if there is no confusion.

Fix an algebraic closure $\bar{\mathbf{Q}}_{p}$ of $\mathbf{Q}_{p}$. Let $K^{\mathrm{Gal}}$ be the Galois closure of $K$ in $\bar{\mathbf{Q}}_{p}$. The algebra $K^{\mathrm{Gal}}\otimes_{\mathbf{Q}_{p}}K$ is an Artin algebra. Consider the map
\begin{eqnarray*}
K^{\mathrm{Gal}}\otimes_{\mathbf{Q}_{p}}K & \to &\prod_{\sigma}K^{\mathrm{Gal}} \\
a\otimes b  &\mapsto &(a\cdot\sigma(b))_{\sigma} 
\end{eqnarray*}
where $\sigma$ runs over $\mathbf{Q}_{p}$-algebra embeddings from  $K$ onto $\bar{\mathbf{Q}}_{p}$. It is an isomorphism. We identify these two $\mathbf{Q}_{p}$-algebras from now on.

Let $\mathscr{X}=\mathrm{Sp}(S)$ be a $K^{\mathrm{Gal}}$-affinoid rigid space. Then $S$ is the quotient algebra of some Tate algebra $\mathbf{Q}_{p}\langle X_{1},\cdots, X_{d} \rangle$. We can endow $S$ with the quotient norm. In this way, $S$ becomes a topological $\mathbf{Q}_{p}$-algebra. Let $L$ be a fixed finite extension of $\mathbf{Q}_{p}$. The trace map from $K$ to $\mathbf{Q}_{p}$ induces a trace map from $S\otimes_{\mathbf{Q}_{p}}K$ to $S$. We denote the latter also by $\mathrm{tr}$. Suppose that $L$ contains $K^{\mathrm{Gal}}$. Denote the set of $L$-rational points of $\mathscr{X}$ by $\mathscr{X}(L)$. Identify $L\otimes_{\mathbf{Q}_{p}}K$ with $\prod_{\sigma}L$ as above. Let $L(n)$ be the the vector space $L$ such that $\mathscr{G}_{K}$ acts on it by $n$-th power of the cyclotomic character.

Let $\mathbf{B}_{\mathrm{st}}$, $\mathbf{B}_{\mathrm{dR}}$ and $\mathbf{B}_{\mathrm{cris}}$ be the semi-stable, de Rham and crystalline Fontaine rings, respectively. Denote their tensor products with $L$ over $\mathbf{Q}_{p}$ by $\mathbf{B}_{\mathrm{st},L}$, $\mathbf{B}_{\mathrm{dR},L}$ and $\mathbf{B}_{\mathrm{cris},L}$, respectively. Let $\mathbf{B}_{e}$ be the ring of elements stabilized by $\varphi$ in $\mathbf{B}_{\mathrm{cris}}$. Denote $\mathbf{B}_{e} \otimes_{\mathbf{Q}_{p}}L$ by $\mathbf{B}_{e,L}$. Let $\mathbf{Q}_{p}^{ur}$ be the $p$-adic completion of the maximal unramified extension of $\mathbf{Q}_{p}$. This field is contained in $\mathbf{B}_{\mathrm{cris}}$.  

We fix once and for all a system of primitive $p^{m}$-th roots of unity $\epsilon_{m}$ such that $\epsilon_{m+1}^{p} = \epsilon_{m}$. Denote it by $\epsilon$. Let $t$ be the element $\log \epsilon$ in $\mathbf{B}_{\mathrm{cris}}$. The $\varphi$-action on $t$ is multiplication by $p$. The Galois action on $t$ is via the cyclotomic character. 

Let $u$ be the element $\log p$ in $\mathbf{B}_{\mathrm{st}}$. The operator $N$ sends $u$ to $-1$. The $\varphi$-action on $u$ is multiplication by $p$.

Let $\psi_{1}:\mathscr{G}_{K} \to L$ be the continuous homomorphism which is trivial on $I_{K}$ and takes the value $f$ at the Frobenius element of $\mathscr{G}_{K}$. Let $\psi_{2}:\mathscr{G}_{K} \to L$ be the logarithm of cyclotomic character. Using the local class field theory, we can identify the Lie algebra of $I_{K}^{\mathrm{ab}}$ with $K$.   

By a family of $d$ dimensional Galois representations over $\mathscr{X}$, we mean a free $S$-module $\mathbf{V}$ of rank $d$ with continuous $S$-linear $\mathscr{G}_{K}$-action. The family $\det \mathbf{V}$ is a free $S$-module $\mathbf{V}$ of rank $1$, or equivalently we have a continuous homomorphism $\det \mathbf{V}:\mathscr{G}_{K} \to S^{\times}$ . Then taking logarithm we get a continuous homomorphism form $\mathscr{G}_{K}$ to $S$. The homomorphism is determined by the associated homomorphism of Lie algebras on $I_{K}^{\mathrm{ab}}$ and its value at a  fixed Frobenius element. Note that the space of homomorphisms of Lie algebra of $I_{K}^{\mathrm{ab}}$ to $S$ is $\mathrm{Hom}_{\mathbf{Q}_{p}}(K,S)$ via local class theory. In this way it can be viewed as a free $K\otimes_{\mathbf{Q}_{p}}S$-module of rank $1$. Moreover for any $\kappa \in K\otimes_{\mathbf{Q}_{p}}S$  there is a unique homomorphism $\kappa \psi_{2}$ from $\mathscr{G}_{K}$ to $S$ such that the corresponding map on Lie algebra of $I_{K}^{\mathrm{ab}}$ is $\kappa\cdot d\psi_{2}$ and the value at the element in $\mathscr{G}_{K}$ correspoding to $p$ under Artin map is $0$. Then any homomorphism between $\mathscr{G}_{K}$ and $S$ must be of the form $\delta \psi_{1}+ \kappa \psi_{2}$ where $\delta \in S$ and $\kappa \in K\otimes_{\mathbf{Q}_{p}}S$. 

Our main result is the following (see section 4 for the definition of a Galois representation of monodromy type).
\begin{thm}\label{thm:main}
Let $\mathbf{V}$ be a family of two dimensional Galois representations over $\mathscr{X}$. Let $\log \det \mathbf{V} = \delta \psi_{1}+ \kappa \psi_{2}$  where $\delta \in S$ and $\kappa \in K\otimes_{\mathbf{Q}_{p}}S$. Suppose that $((\mathbf{V}\otimes_{\mathbf{Q}_{p}}\mathbf{B}_{\mathrm{cris}})^{\varphi^{f}=\alpha})^{\mathscr{G}_{K}}$ is a free $S\otimes_{\mathbf{Q}_{p}}K_{0}$-module of rank $1$ for some $\alpha \in S$. Let $x$ be a point in $\mathscr{X}(L)$ such that $V_{x}$ is of monodromy type with Fontaine's $\mathscr{L}$-invariant $\mathscr{L} \in L\otimes_{\mathbf{Q}_{p}}K$. Then the differential form
$$\frac{d\alpha}{f\alpha}+\frac{1}{2}d\delta-\frac{1}{2n}\mathrm{tr} (\mathscr{L}\cdot d\kappa)$$
vanishes at the point $x$.
\end{thm}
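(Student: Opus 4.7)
The plan is to adapt Colmez's cohomological proof \cite{C} to the ramified setting $K/\mathbf{Q}_{p}$. A tangent vector at $x \in \mathscr{X}(L)$ deforms $V_{x}$ over the dual numbers and produces a $1$-cocycle $c:\mathscr{G}_{K}\to \mathrm{End}(V_{x})$. I would then identify three natural ways of reading information off $c$: via the $\varphi^{f}$-eigenvector furnished by the hypothesis (yielding $d\alpha/(f\alpha)$), via the trace $\mathrm{tr}\, c$ and the decomposition $\log\det\mathbf{V}=\delta\psi_{1}+\kappa\psi_{2}$ of the determinant (yielding $d\delta$ and $d\kappa$), and via the fundamental exact sequence $0\to\mathbf{Q}_{p}\to\mathbf{B}_{e}\to\mathbf{B}_{\mathrm{dR}}/\mathbf{B}_{\mathrm{dR}}^{+}\to 0$ applied to $V_{x}\otimes\mathbf{B}$, which will pull in the $\mathscr{L}$-invariant via the monodromy-type hypothesis at $x$. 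The desired formula then appears as a single linear relation among the images of $c$ in an appropriate $H^{1}$.

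First, I would pick a generator $\tilde e$ of $((\mathbf{V}\otimes\mathbf{B}_{\mathrm{cris}})^{\varphi^{f}=\alpha})^{\mathscr{G}_{K}}$ over $S\otimes_{\mathbf{Q}_{p}}K_{0}$, specialize at $x$, and differentiate along the tangent direction. The derivative $\partial\tilde e\in V_{x}\otimes\mathbf{B}_{\mathrm{cris}}$ satisfies the twisted relations
\begin{equation*}
(\varphi^{f}-\alpha(x))\partial\tilde e=(\partial\alpha)\, e,\qquad (g-1)\partial\tilde e=c(g)\, e,
\end{equation*}
so that, modulo coboundaries, $c\cdot e$ is the image of a class in $H^{1}(\mathscr{G}_{K},V_{x}\otimes\mathbf{B}_{\mathrm{cris}})$ whose Frobenius-eigenvalue component extracts $\partial\alpha/\alpha$. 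Dividing by $f$ converts the $\varphi^{f}$-eigenvalue $\alpha$ back to the normalization of $\psi_{1}$, which takes the value $f$ on a Frobenius lift.

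Next, the monodromy-type assumption gives a presentation of $\mathbf{D}_{\mathrm{st}}(V_{x})$ as a rank-$2$ admissible filtered $(\varphi,N)$-module with $N\neq 0$ whose Frobenius-stable line is not stable under the Hodge filtration; the $\mathscr{L}$-invariant $\mathscr{L}\in L\otimes_{\mathbf{Q}_{p}}K$ records, embedding by embedding, the deviation between the two lines. Pushing $c$ through the fundamental exact sequence splits it into a $\mathbf{B}_{e}$-part, which sees $d\alpha/(f\alpha)$ together with $d\delta$ (normalized by $1/2$ because passing from $\mathrm{End}$ to $\mathrm{ad}^{0}$ on a rank-$2$ module absorbs half of the trace), and a $\mathbf{B}_{\mathrm{dR}}/\mathbf{B}_{\mathrm{dR}}^{+}$-part, which couples $d\kappa$ against $\mathscr{L}$ through the embedding-indexed Hodge filtration on $\mathbf{D}_{\mathrm{dR}}(V_{x})=\mathbf{D}_{\mathrm{st}}(V_{x})\otimes_{K_{0}}K$. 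Equating the two expressions of this single class in $H^{1}$ yields the asserted vanishing of
\begin{equation*}
\frac{d\alpha}{f\alpha}+\tfrac{1}{2}d\delta-\tfrac{1}{2n}\mathrm{tr}(\mathscr{L}\cdot d\kappa).
\end{equation*}

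The main obstacle is the $K$-linear bookkeeping. Colmez's original proof over $\mathbf{Q}_{p}$ treats $\mathscr{L}$ and the Hodge--Tate weights as scalars, whereas here they live in $L\otimes_{\mathbf{Q}_{p}}K\cong\prod_{\sigma}L$ and must be read against the orthogonal splitting of $K_{0}\otimes_{\mathbf{Q}_{p}}L$ that indexes the $\varphi$-eigenvalues. Isolating the single-eigenvalue $\alpha$-component from this multi-eigenvalue picture, and matching it consistently with the embedding-indexed contributions of $d\kappa$ and $\mathscr{L}$, is precisely where the normalization factor $1/(2n)=1/(2[K:\mathbf{Q}_{p}])$ is generated; getting every constant right, rather than only up to scaling, will be the delicate point that the argument over $\mathbf{Q}_{p}$ does not illuminate.
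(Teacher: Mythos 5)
Your proposal follows essentially the same architecture as the paper: reduce to dual numbers, form the cocycle $c$ in $\mathrm{End}(V_x)$, read off $d\alpha/(f\alpha)$ from the crystalline eigenvector, read off $\tfrac12 d(\log\det)$ from the trace, and use the fundamental exact sequence applied to $\mathrm{End}^0(V_x)$ to couple the remaining part against $\mathscr{L}$. The normalizations you identify (dividing by $f$ to convert $\varphi^f$ to $\psi_1$, the factor $\tfrac12$ from splitting $\mathrm{End}$ as $\mathrm{ad}^0\oplus L$, the $1/n$ from passing a trace through $[K:\Qp]=n$) all appear in the paper for the reasons you give.

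The one point that you leave genuinely vague is the mechanism by which $\mathscr{L}$ actually enters, and here your description is slightly off. You say $d\kappa$ couples against $\mathscr{L}$ ``through the embedding-indexed Hodge filtration''; in the paper the coupling is a Tate local duality (cup product) computation, not a direct reading of the filtration. Concretely, the paper first records (Lemma~\ref{lem:class}) that the extension class of the middle step in the $N$-filtration of $\mathrm{End}^0$ is $(\exp(\mathscr{L}))+(p)\in H^1(L(1))$; Proposition~\ref{prop:triancase} then imposes, via the cup product formula of Proposition~\ref{prop:calcupprod}, the linear constraint $\gamma_1=\tfrac1n\mathrm{tr}(\gamma_2\mathscr{L})$ on a class $\gamma_1\psi_1+\gamma_2\psi_2\in H^1(L)$ that lifts to $H^1(W_3)$. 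This cup-product constraint is the actual source of the $\tfrac1n\mathrm{tr}(\mathscr{L}\cdot d\kappa)$ term; your sketch conflates it with the filtration itself. Relatedly, Lemma~\ref{lem:class} is only computed for weight $\mathbf 1$; to apply it for the general weight $k$ of $V_x$ one needs the weight-independence Lemma~\ref{lem:indepofwt}, which your sketch does not mention. Neither is a fatal omission at the sketch level — you flag the constants as the delicate part — but if you turn this into a proof you will need the cup-product/Tate-duality step and the weight-reduction lemma explicitly; those are the technical heart of the argument.
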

Note if $K=\mathbf{Q}_{p}$, this recovers Colmez's result. We also have a ``degenerated'' version of this formula (see Proposition \ref{prop:degen}).

The plan of this paper is as follows. In the second and the third section, we recall some facts of local class field theory and $p$-adic Galois representation theory, respectively. We give the structure of $\mathbf{D}_{\mathrm{st}}(\mathbf{V}_{x})$ in the fourth section. Then we study the representation $\mathrm{End}^{0}(\mathbf{V}_{x})$ in detail in section 5. In particular we establish a cohomological property  \ref{prop:cohom} about $\mathscr{L}$-invariants. In section 6, we investigate a special kind of extensions of  the trivial $(\varphi,N)$-module. They are closely related to the constraint condition on $((\mathbf{V}\otimes_{\mathbf{Q}_{p}}\mathbf{B}_{\mathrm{cris}})^{\varphi^{f}=\alpha})^{\mathscr{G}_{K}}$.  We combine the results in the previous sections to prove our main theorem in the last section.

\noindent{\bf Acknowledgement}.
The author thanks his advisor Qingchun Tian. Without his suggestion and encouragement, the current work could not be accomplished. The author thanks Pierre Colmez for his enlightening lectures in Morningside Center of Mathematic on his works on $p$-adic local Langlands and for his suggestion to consider the ``degenerated'' cases and informing me Schraen's work in \cite{S}. The author thanks Tong Liu, Bingyong Xie, Liang Xiao and Yi Ouyang for their lectures introducing the $p$-adic Galois representation theory. The author thanks Ye Tian and Song Wang for organizing the workshop which provided a great opportunity to learn advanced mathematics. The author thanks Ruochuan Liu and Liang Xiao for much useful discussion.

\section{Some facts of local class field theory} ~\vskip 1pt
In this section we recall some facts of local class field theory. All the $\mathscr{G}_{K}$-representations are $L$-linear. All Galois cohomology group are for K.

The theory of Kummer gives us a canonical isomorphism  
\begin{eqnarray*}
L\otimes_{\mathbf{Z}_{p}}(\displaystyle{\varprojlim_{m}}K^{\times}/(K^{\times})^{p^{m}})& \to&\Hk{1}{L\otimes_{\mathbf{Q}_{p}}\mathbf{Q}_{p}(1)} \\
\sum_{i}a_{i}\otimes \alpha_{i}&\mapsto &\sum_{i}a_{i}\otimes(\alpha_{i})
\end{eqnarray*}
where the map $(\quad): \mathscr{G}_{K} \to \mathbf{Q}_{p}(1)$ is defined by 
$$\frac{g(\sqrt[p^{m}]{\alpha})}{\sqrt [p^{m}]{\alpha}} = \epsilon_{m}^{(\alpha)(g)} $$ 
for $\alpha  \in K^{\times}$ and $g \in \mathscr{G}_{K}$  ($(\sqrt[p^{m+1}]{\alpha})^{p} = \sqrt[p^{m}]{\alpha}$). 

On the other hand, the exponential map $$\exp: p\mathscr{O}_{K} \to K^{\times}$$  
extends to an embedding of $L\otimes_{\mathbf{Q}_{p}}K$ into  $\Hk{1}{L(1)}$ and induces a decomposition $\Hk{1}{L(1)} = \exp(L\otimes_{\mathbf{Q}_{p}}K)\oplus (p) L$. We identify $L\otimes_{\mathbf{Q}_{p}}K$ with its image $\exp(L\otimes_{\mathbf{Q}_{p}}K)$ in $\Hk{1}{L(1)}$ from now on.

In the cohomology group $\Hk{1}{L}$, there is an element $\psi_{1}$ which is trivial on the inertial subgroup of $\mathscr{G}_{K}$ and takes the value $f$ at the Frobenius element of $\mathscr{G}_{K}$ and another element $\psi_{2}$ which is the logarithm of the cyclotomic character of $\mathscr{G}_{K}$.

The local class field theory tells us $\Hk{2}{L(1)}$ is isomorphic to $L$ and the pairing $$\cup:\Hk{1}{L} \times \Hk{1}{L(1)} \to \Hk{2}{L(1)}$$ defined by the cup product is perfect. It induces a surjective homomorphism  from $\Hk{1}{L}$ to $\mathrm{Hom}_{L}(L\otimes_{\mathbf{Q}_{p}}K, \Hk{2}{L(1)})$.  The kernel is spaned by $\psi_{1}$. The decomposition 
$$\Hk{1}{L(1)} = \exp(L\otimes_{\mathbf{Q}_{p}}K)\oplus (p) L$$ induces a decompostion $\Hk{1}{L} = \mathrm{Hom}_{L}(L\otimes_{\mathbf{Q}_{p}}K, \Hk{2}{L(1)}) \oplus \psi_{1}L$. Note that $\psi_{2}$ is a generator of the free $L\otimes_{\mathbf{Q}_{p}}K$-module $\mathrm{Hom}_{L}(L\otimes_{\mathbf{Q}_{p}}K, \Hk{2}{L(1)})$. 

\begin{prop}\label{prop:calcupprod}
Let $a_{1} \in L$ and $a_{2} \in L\otimes_{\mathbf{Q}_{p}}K$. The cup product of $a_{1}\psi_{1} + a_{2}\psi_{2}$ and $b_{1}(p) + (\exp(b_{2}))$ is $$(a_{1}b_{1} - \frac{1}{n}\mathrm{tr}(a_{2}b_{2}))((\psi_{1}\cup(p)).$$
\end{prop}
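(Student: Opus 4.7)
The plan is to expand by $L$-bilinearity, read off two vanishings from the orthogonality decomposition recorded above, collapse the last cross term via the $L\otimes_{\Qp}K$-module structure, and then fix one scalar by descending from $K$ to $\Qp$.

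Expanding $(a_{1}\psi_{1}+a_{2}\psi_{2})\cup(b_{1}(p)+(\exp(b_{2})))$ $L$-bilinearly gives four cup products. Two of them vanish for free: by construction, the decomposition $\Hk{1}{L}=\Hom_{L}(L\otimes_{\Qp}K,\Hk{2}{L(1)})\oplus\psi_{1}L$ is dual under the Tate pairing to $\Hk{1}{L(1)}=\exp(L\otimes_{\Qp}K)\oplus(p)L$, so $\psi_{1}\cup(\exp(b_{2}))=0$ (the factor $\psi_{1}L$ annihilates $\exp(L\otimes_{\Qp}K)$) and $(a_{2}\psi_{2})\cup(p)=0$ (the factor $\Hom_{L}(L\otimes_{\Qp}K,\Hk{2}{L(1)})$ annihilates $(p)L$). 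The pure term $a_{1}\psi_{1}\cup b_{1}(p)$ contributes $a_{1}b_{1}(\psi_{1}\cup(p))$ by linearity. For the surviving cross term, the $L\otimes_{\Qp}K$-action on $\Hom_{L}(L\otimes_{\Qp}K,\Hk{2}{L(1)})$ is $(a\cdot\phi)(x)=\phi(ax)$, so $(a_{2}\psi_{2})\cup(\exp(b_{2}))=\psi_{2}\cup(\exp(a_{2}b_{2}))$, and the proposition reduces to the identity
\[
\psi_{2}\cup(\exp(c))=-\tfrac{1}{n}\tr(c)\,(\psi_{1}\cup(p))\qquad\text{for every }c\in L\otimes_{\Qp}K.
\]

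To prove this identity, I would corestrict from $K$ down to $\Qp$. Since $\psi_{2}$ is the logarithm of the cyclotomic character, it factors through $\mathscr{G}_{\Qp}$, so $\psi_{2,K}=\Res\,\psi_{2,\Qp}$; the projection formula then reads
\[
\Cor_{K/\Qp}(\psi_{2,K}\cup(\exp(c)))=\psi_{2,\Qp}\cup\Cor_{K/\Qp}(\exp(c)).
\]
Corestriction on $H^{1}(\cdot,L(1))$ corresponds to $\Nm$ under Kummer, and $\Nm_{K/\Qp}\exp(c)=\exp(\tr_{K/\Qp}c)$, so the right-hand side equals $\psi_{2,\Qp}\cup(\exp(\tr(c)))$. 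Applying the same projection formula to $\psi_{1,K}=\Res\,\psi_{1,\Qp}$ together with $\Cor_{K/\Qp}(p)_{K}=(p^{n})_{\Qp}=n(p)_{\Qp}$ gives $\Cor_{K/\Qp}(\psi_{1,K}\cup(p)_{K})=n(\psi_{1,\Qp}\cup(p)_{\Qp})$. If one writes $\psi_{2,K}\cup(\exp(c))=\lambda(\psi_{1}\cup(p))$ with $\lambda\in L$, applying corestriction and invoking the $\Qp$-level identity $\psi_{2,\Qp}\cup(\exp(c'))=-c'(\psi_{1,\Qp}\cup(p)_{\Qp})$ for $c'\in L$ forces $n\lambda=-\tr(c)$, which is exactly the required coefficient $-\tfrac{1}{n}\tr(c)$.

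The step I expect to be the main obstacle is the base-case identity $\psi_{2,\Qp}\cup(\exp(c'))=-c'(\psi_{1,\Qp}\cup(p)_{\Qp})$ over $\Qp$. This is a normalization statement for the local Tate pairing via local reciprocity: under arithmetic Artin, $\exp(c')$ maps to an element acting on $\mu_{p^{\infty}}$ as $\zeta\mapsto\zeta^{\exp(-c')}$, so $\log\chi_{\mathrm{cyc}}$ takes the value $-c'$ there, while $\psi_{1,\Qp}\cup(p)_{\Qp}$ provides the reference generator of $H^{2}(\mathscr{G}_{\Qp},L(1))$. Though classical, the precise sign depends on the conventions chosen for Artin (arithmetic versus geometric), for the Kummer cocycle defined by $\epsilon_{m}^{(\alpha)(g)}=g(\sqrt[p^{m}]{\alpha})/\sqrt[p^{m}]{\alpha}$, and for $\log\chi_{\mathrm{cyc}}$, and I would verify it by an explicit cocycle computation with those normalizations fixed; once the sign is pinned down, the rest of the argument is bookkeeping.
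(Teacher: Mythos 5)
Your argument is correct and is essentially the paper's own proof: after killing the two cross terms via the orthogonality built into the decompositions of $H^{1}(L)$ and $H^{1}(L(1))$, both you and the paper reduce the surviving $\psi_{2}$-term to the well-known $K=\mathbf{Q}_{p}$ identity by corestricting to $\mathscr{G}_{\mathbf{Q}_{p}}$, using the projection formula together with $\mathrm{Nm}_{K/\mathbf{Q}_{p}}(p)=p^{n}$, $\mathrm{Nm}_{K/\mathbf{Q}_{p}}(\exp(c))=\exp(\mathrm{tr}(c))$, and the fact that corestriction is an isomorphism on $H^{2}$. The only cosmetic difference is that you expand and treat the four terms separately before corestricting, whereas the paper corestricts the whole cup product at once.
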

\begin{proof}
The case $K = \mathbf{Q}_{p}$ is well known. For general $K$, consider the restriction map $\mathrm{res}$ and the corestriction map $\mathrm{cor}$ between $\mathscr{G}_{K}$ and $\mathscr{G}_{\mathbf{Q}_{p}}$. Then 
\begin{align*}
\mathrm{cor}((a_{1}\psi_{1} + a_{2}\psi_{2}) \cup (b_{1}(p) + (\exp(b_{2})))) &=a_{1}b_{1}(\psi_{1}\cup \mathrm{cor}((p)))+\psi_{2} \cup \mathrm{cor}((\exp(a_{2}b_{2})))\\
&=na_{1}b_{1}(\psi_{1}\cup (p)) +\mathrm{tr}(a_{2}b_{2}) \psi_{2} \cup (\exp(1))\\
&=(na_{1}b_{1}-\mathrm{tr}(a_{2}b_{2}))(\psi_{1}\cup ((p)))\\
&=(a_{1}b_{1} - \frac{1}{n}\mathrm{tr}(a_{2}b_{2}))\mathrm{cor}((\psi_{1}\cup (p)).
\end{align*}
The map $\mathrm{cor}$ is an isomorphism so the equality in the proposition holds.
\end{proof}


\section{Admissible $(\varphi,N)$-filtered modules and semi-stable representations} ~\vskip 1pt
A $(\varphi,N)$-module $D$ over $K$ with coefficient in $L$  is a free $L\otimes_{\mathbf{Q}_{p}}K_{0}$-module of finite rank with $(\varphi,N)$-action such that:
\begin{itemize}
\item The $\varphi$-action is semi-linear and bijective.
\item The $N$-action is linear.
\item $N\varphi=p\varphi N$.
\end{itemize}
We omit the coefficient field $L$ and the base field $K$ from now on.

Let $D$ be a $(\varphi,N)$-module. Denote its dimension as an $L$-vector space by $d$. Suppose that $v$ is a generator $e$ of $\wedge^{d}D$ as an $L$-vector space. Suppose that $\varphi(v) =\alpha v$ for some $\alpha \in L^{\times}$.   Define the Newton number $t_{N}(D)$ of $D$ as $e\cdot v_{p}(\alpha)$. Define a functor from the category of $(\varphi,N)$-modules to the category of $\mathbf{B}_{e,L}$-modules with Galois actions by
$$\mathbf{X}_{\mathrm{st}}(D) :=(D\otimes_{K_{0}}\mathbf{B}_{\mathrm{st}})^{\varphi=1, N=0}.$$ 
The Galois action is induced by the action on $\mathbf{B}_{\mathrm{dR}}$. By Proposition 5.1 in \cite{CF}, we know the functor is exact. 

A filtered $(\varphi,N)$-module $(D, \mathrm{Fil})$ is a $(\varphi,N)$-module $D$ with a decreasing separated exhausted $L\otimes_{\mathbf{Q}_{p}}K$-filtration $\mathrm{Fil}$ on $D_{K}=D\otimes_{K_{0}}K$. Define its Hodge number by the following formula 
$$t_{H}((D_{K}, \mathrm{Fil})):=\sum_{i}i\cdot\dim_{L} (\mathrm{Fil}^{i}(D_{K})/\mathrm{Fil}^{i+1}(D_{K})).$$ 
The elements in the set 
$$\{i|\mathrm{Fil}^{i}(D_{K})/\mathrm{Fil}^{i+1}(D_{K}) \neq 0\}$$
are called the Hodge-Tate weights of this filtered module.

A filtered $(\varphi,N)$-submodule $(D', \mathrm{Fil})$ of $(D, \mathrm{Fil})$ is a $(\varphi,N)$-stable $L\otimes_{\mathbf{Q}_{p}}K_{0}$-submodule of $D$ with the induced filtration, i.e. the filtration defined by:
$$\mathrm{Fil}^{i}(D'_{K}) := \mathrm{Fil}^{i}(D_{K}) \cap (D'_{K}).$$

We define a functor $\mathbf{X}_{\mathrm{dR}}$ from the category of filtered $(\varphi,N)$-modules to the category of $\mathbf{B}_{\mathrm{dR},L}/\mathbf{B}^{+}_{\mathrm{dR},L}$-modules with Galois actions by
$$\mathbf{X}_{\mathrm{dR}}((D,\mathrm{Fil})):= (D_{K}\otimes_{K}\mathbf{B}_{\mathrm{dR}})/\mathrm{Fil}^{0}(D_{K}\otimes_{K}\mathbf{B}_{\mathrm{dR}}), $$
where the filtration on $D_{K}\otimes_{K}\mathbf{B}_{\mathrm{dR}}$ is given by
$$\mathrm{Fil}^{k}(D_{K}\otimes_{K}\mathbf{B}_{\mathrm{dR}}) :=\sum_{i+j=k}\mathrm{Fil}^{i}(D_{K})\otimes_{K}t^{j}\mathbf{B}_{\mathrm{dR}}^{+}.$$ 
The Galois action is induced by the action on $\mathbf{B}_{\mathrm{dR}}$.

A filtered $(\varphi,N)$-module $(D, \mathrm{Fil})$ is called admissible if it satisfies the following conditions:
\begin{itemize}
\item $t_{N}(D)=t_{H}((D_{K}, \mathrm{Fil}))$.
\item For any filtered $(\varphi,N)$-submodule $D'$ of $D$, $t_{N}(D') \geq t_{H}((D'_{K}, \mathrm{Fil})).$
\end{itemize}

\begin{example}
Let $\alpha$ be an element in $L^{\times}$,  $k=(k_{\sigma})_{\sigma}$   elements in $\displaystyle{\oplus_{\sigma}}\mathbf{Z}$ satisfying $v_{p}(\alpha) = \sum_{\sigma} k_{\sigma}$. Then the following filtered $(\varphi,N)$-module is admissible.
$$D := L\otimes_{\mathbf{Q}_{p}}K_{0}\cdot e$$
$$\begin{array}{cc}
\varphi(e) = (\alpha,1,\cdots,1) v   \\
N(v) = 0 
\end{array}$$
$$\mathrm{Fil}_{\{k_{\sigma}\}}^{k}(D_{K}) := \oplus_{k_{\sigma} \leq k} (D_{K})_{\sigma}.
$$
\end{example}
Denote the category of admissible $(\varphi,N)$-filtered modules by $\mathbf{MF}^{\mathrm{ad}}_{K,L}(\varphi, N)$. It is a Tannakian category.

For an object $(D,\mathrm{Fil})$ in $\mathbf{MF}^{\mathrm{ad}}_{K,L}(\varphi, N)$, define
$$\mathbf{V}_{\mathrm{st}}((D, \mathrm{Fil})):= \ker(\mathbf{X}_{\mathrm{st}}(D) \to \mathbf{X}_{\mathrm{dR}}((D,\mathrm{Fil}))).
$$
It is an $L$-vector space with continuous Galois action.

On the other hand, consider continuous $\mathscr{G}_{K}$-representations over finite dimension $L$-vector spaces. Such a representation $V$ is called semi-stable if 
$$\mathrm{rank}_{L\otimes_{\mathbf{Q}_{p}} K_{0}}(V\otimes_{\mathbf{Q}_{p}}\mathbf{B}_{\mathrm{st}})^{\mathscr{G}_{K}} = \dim_{L}V.$$
Denote the category of semi-stable continuous $\mathscr{G}_{K}$-representations over finite dimension $L$-vector spaces by $\mathbf{Rep}^{\mathrm{st}}_{L}(\mathscr{G}_{K})$. It is also a Tannakian category. For any object $V$ in $\mathbf{Rep}^{\mathrm{st}}_{L}(\mathscr{G}_{K})$, define
$$\mathbf{D}_{\mathrm{st}}(V) := (V\otimes_{\mathbf{Q}_{p}}\mathbf{B}_{\mathrm{st}})^{\mathscr{G}_{K}}.$$ 

We have the following theorem of Colmez and Fontaine:
\begin{thm}[Colmez-Fontaine]\cite{CF}\label{thm:strep}$\,$

\begin{enumerate}
\item The functor $\mathbf{D}_{\mathrm{st}}$(resp. $\mathbf{V}_{\mathrm{st}}$) is a well defined equivalence from $\mathbf{Rep}^{\mathrm{st}}_{L}(\mathscr{G}_{K})$ to $\mathbf{MF}^{\mathrm{ad}}_{K,L}(\varphi, N)$( resp. from $\mathbf{MF}^{\mathrm{ad}}_{K,L}(\varphi, N)$ to $\mathbf{Rep}^{\mathrm{st}}_{L}(\mathscr{G}_{K})$).
\item For any object $(D, \mathrm{Fil})$ in $\mathbf{MF}^{\mathrm{ad}}_{K,L}(\varphi, N)$, the following sequence is exact:
$$0 \to \mathbf{V}_{\mathrm{st}}(D) \to \mathbf{X}_{\mathrm{st}}(D) \to \mathbf{X}_{\mathrm{dR}}((D,\mathrm{Fil})) \to 0.$$
\item For any object $(D, \mathrm{Fil})$ in $\mathbf{MF}^{\mathrm{ad}}_{K,L}(\varphi, N)$, $\mathbf{D}_{\mathrm{st}}(\mathbf{V}_{\mathrm{st}}((D,Fil)))= (D, \mathrm{Fil})$ as submodules of $D\otimes_{K_{0}}\mathbf{B}_{\mathrm{st}}$.
\item For any object $V$ in $\mathbf{Rep}^{\mathrm{st}}_{L}(\mathscr{G}_{K})$, $\mathbf{V}_{\mathrm{st}}(\mathbf{D}_{\mathrm{st}}(V))= V$ as submodules of $V\otimes_{\mathbf{Q}_{p}}\mathbf{B}_{\mathrm{st}}$.
\end{enumerate}
\end{thm}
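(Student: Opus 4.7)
The plan is to mirror the strategy of Colmez's original argument in the setting of a general $p$-adic local field $K$, performing a cohomological computation on the first-order deformation of $V_{x}$ along the family $\mathbf{V}$ and then using Proposition \ref{prop:calcupprod} to convert the extra ``trace over $K/\mathbf{Q}_{p}$'' that appears in the non-split case into the factor $\frac{1}{n}\mathrm{tr}(\mathscr{L}\cdot\,)$ of the formula. Since the statement is an equality of differential forms at $x$, it suffices to test it against every tangent vector $v$ to $\mathscr{X}$ at $x$.

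First, for a fixed tangent vector $v$, I would restrict $\mathbf{V}$ to $\mathrm{Spec}\,L[\epsilon]/(\epsilon^{2})$ along $v$; this produces a first-order deformation of $V_{x}$ and hence a cohomology class $c(v)\in H^{1}(\mathscr{G}_{K},\mathrm{End}(V_{x}))$. The trace decomposition $\mathrm{End}(V_{x})=\mathrm{End}^{0}(V_{x})\oplus L\cdot\mathrm{id}$ splits $c(v)$ into a ``traceless'' part $c_{0}(v)\in H^{1}(\mathscr{G}_{K},\mathrm{End}^{0}(V_{x}))$ and a scalar part. Differentiating the identity $\log\det\mathbf{V}=\delta\psi_{1}+\kappa\psi_{2}$ at $x$ identifies the scalar part with $\tfrac{1}{2}\bigl(d\delta(v)\psi_{1}+d\kappa(v)\psi_{2}\bigr)$, the $\tfrac{1}{2}$ arising from $\mathrm{tr}(\mathrm{id})=2$ in rank two. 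It then suffices to show that the sum of the contributions of $c_{0}(v)$, of the scalar part, and of $d\alpha(v)/(f\alpha(x))$ matches the claimed vanishing.

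Second, I would translate the hypothesis that $((\mathbf{V}\otimes_{\mathbf{Q}_{p}}\mathbf{B}_{\mathrm{cris}})^{\varphi^{f}=\alpha})^{\mathscr{G}_{K}}$ is free of rank one over $S\otimes_{\mathbf{Q}_{p}}K_{0}$ into a cohomological constraint on $c(v)$. At the point $x$, since $V_{x}$ is of monodromy type, this eigenspace corresponds to the $\varphi^{f}$-eigenline inside $\mathbf{D}_{\mathrm{st}}(V_{x})^{N=0}$, and the family hypothesis forces the deformation to preserve this eigenline up to an infinitesimal scaling by $d\alpha(v)/(f\alpha(x))$; the factor $f$ arises from the $\varphi^{f}$-normalization together with the $K_{0}$-semilinearity of $\varphi$. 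Using the analysis of extensions of the trivial $(\varphi,N)$-module carried out in section 6, together with the fundamental exact sequence of Theorem \ref{thm:strep}(ii) applied to $\mathbf{D}_{\mathrm{st}}(\mathrm{End}^{0}(V_{x}))$, this constraint produces an explicit cocycle representative of a canonical projection of $c_{0}(v)$ whose unramified component is precisely $d\alpha(v)/(f\alpha(x))$ times the class of $\psi_{1}$.

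Third, I would invoke the cohomological property of the $\mathscr{L}$-invariant formulated as Proposition \ref{prop:cohom}. Applied to $c_{0}(v)$ and the distinguished class in $H^{1}(\mathscr{G}_{K},L(1))$ determined by the Hodge line of $\mathbf{D}_{\mathrm{st}}(V_{x})$, this property expresses the relevant cup product as the pairing against $\mathscr{L}$. Combining the three pieces produces an identity between classes in $H^{2}(\mathscr{G}_{K},L(1))\cong L$; expanding the cup products via Proposition \ref{prop:calcupprod} converts them into the three terms $\frac{d\alpha}{f\alpha}(v)$, $\frac{1}{2}d\delta(v)$, and $-\frac{1}{2n}\mathrm{tr}(\mathscr{L}\cdot d\kappa(v))$, with the normalization $1/n$ delivered by the corestriction formula in that proposition. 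The vanishing of their sum is exactly the conclusion.

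The hardest step will be the second one: correctly differentiating the family-level Frobenius eigenspace condition and extracting from it the right class in $H^{1}(\mathscr{G}_{K},\mathrm{End}^{0}(V_{x}))$. One must carefully separate the $K_{0}$-semilinear Frobenius $\varphi$ from its $\mathbf{Q}_{p}$-linear power $\varphi^{f}$, manage the interaction between the $\mathrm{End}^{0}$-component and the $(\varphi,N)$-filtered structure on $\mathbf{D}_{\mathrm{st}}(V_{x})$, and keep track of bases so that the eventual pairing with $\mathscr{L}\in L\otimes_{\mathbf{Q}_{p}}K$ is computed in the correct embedding-wise coordinates. It is precisely at this point that the passage from $K=\mathbf{Q}_{p}$ to general $K$ introduces genuinely new content, since $\mathscr{L}$ is no longer a scalar but an element of a product of copies of $L$ indexed by the embeddings of $K$ into $\bar{\mathbf{Q}}_{p}$, and the non-triviality of the trace pairing of Proposition \ref{prop:calcupprod} is what makes the $\frac{1}{n}\mathrm{tr}(\mathscr{L}\cdot d\kappa)$ term appear in the formula.
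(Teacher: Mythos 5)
Your proposal does not address the statement you were asked to prove. The statement is the Colmez--Fontaine theorem: that $\mathbf{D}_{\mathrm{st}}$ and $\mathbf{V}_{\mathrm{st}}$ are quasi-inverse equivalences between $\mathbf{Rep}^{\mathrm{st}}_{L}(\mathscr{G}_{K})$ and $\mathbf{MF}^{\mathrm{ad}}_{K,L}(\varphi,N)$, together with the fundamental exact sequence
$0 \to \mathbf{V}_{\mathrm{st}}(D) \to \mathbf{X}_{\mathrm{st}}(D) \to \mathbf{X}_{\mathrm{dR}}((D,\mathrm{Fil})) \to 0$
and the unit/counit identifications. What you have written instead is a strategy for the paper's main result (Theorem \ref{thm:main}), the $\mathscr{L}$-invariant formula: you speak of tangent vectors to $\mathscr{X}$ at $x$, first-order deformations of $V_{x}$, the decomposition of $\mathrm{End}(V_{x})$, the cup-product computation of Proposition \ref{prop:calcupprod}, and the vanishing of a differential form --- none of which has any bearing on the equivalence of categories asserted in Theorem \ref{thm:strep}. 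Worse, your argument explicitly \emph{invokes} Theorem \ref{thm:strep}(ii) as an input in your second step, so even if it were repackaged as a proof of that theorem it would be circular.

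To be clear about what a proof of the actual statement would require: this is the ``weakly admissible implies admissible'' theorem of Colmez and Fontaine, whose proof rests on a detailed study of the period rings ($\mathbf{B}_{e}$, $\mathbf{B}_{\mathrm{st}}$, $\mathbf{B}_{\mathrm{dR}}$), the exact sequence $0 \to \mathbf{Q}_{p} \to \mathbf{B}_{e} \to \mathbf{B}_{\mathrm{dR}}/\mathbf{B}_{\mathrm{dR}}^{+} \to 0$, and an inductive comparison of Newton and Hodge numbers of subobjects. The paper itself offers no proof --- it cites \cite{CF} --- so the expected content here is either a faithful reproduction of that argument or an explicit reduction to the cited reference; your proposal supplies neither.
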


The second statement of the above theorem is called the fundamental exact sequence. The cohomology groups on $\mathbf{X}_{\mathrm{dR}}$-level are easy to compute. We can use the fundamental exact sequence to give the following simple properties of cohomology groups on $\mathbf{X}_{\mathrm{st}}$-level.

\begin{lem}\label{lem:indepofwt1}
Let $(D,\mathrm{Fil})$ be in $\mathbf{MF}^{\mathrm{ad}}_{K,L}(\varphi, N)$. Suppose that $\mathrm{Fil}^{0}(D_{K}) = D_{K}$. The maps $\Hk{k}{\mathbf{V}_{\mathrm{st}}(D)} \to \Hk{k}{\mathbf{X}_{\mathrm{st}}(D)}$ are isomorphisms for all $k$.
\end{lem}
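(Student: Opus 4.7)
The plan is to deduce the lemma from the fundamental exact sequence in Theorem \ref{thm:strep}(2),
$$0 \to \Vst(D) \to \Xst(D) \to \XdR((D,\Fil)) \to 0,$$
by showing that under the hypothesis $\Fil^0(D_K) = D_K$ the continuous Galois cohomology of $\XdR((D,\Fil))$ vanishes in every degree. Once this is established, the long exact sequence of cohomology attached to the fundamental exact sequence immediately yields the claimed isomorphisms $H^k(\Vst(D)) \to H^k(\Xst(D))$ for every $k$.

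First I would unpack the hypothesis. The condition $\Fil^0(D_K) = D_K$ forces $\Fil^i(D_K) = D_K$ for every $i \leq 0$, so $\gr^i(D_K) = 0$ whenever $i < 0$; equivalently every Hodge-Tate weight of $(D,\Fil)$ is non-negative. Setting $X := D_K \otimes_K \BdR$ with the tensor product filtration $\Fil^k X = \sum_{i+j=k}\Fil^i(D_K)\otimes_K t^j \BdR^+$, one has $\XdR((D,\Fil)) = X/\Fil^0 X$, and exhaustiveness of the filtration on $\BdR$ gives
$$\XdR((D,\Fil)) \;=\; \varinjlim_{n\geq 0}\,\Fil^{-n}X / \Fil^0 X.$$
The graded pieces of this ascending filtration are the modules $\gr^{-k} X$ for $k \geq 1$, and the definition of the tensor product filtration yields
$$\gr^{-k} X \;=\; \bigoplus_{i \geq 0}\gr^i(D_K) \otimes_K \mathbf{C}_p(-k-i),$$
a finite direct sum of Tate twists $\mathbf{C}_p(n)$ whose weights $n = -k-i \leq -1$ are all strictly negative.

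Next I would invoke Tate's theorem on the continuous Galois cohomology of Tate twists: $H^j(\mathscr{G}_K, \mathbf{C}_p(n)) = 0$ for every $n \neq 0$ in every relevant degree $j$. Tensoring with the finite-dimensional $K$-vector space $\gr^i(D_K)$ preserves this vanishing, so each graded piece $\gr^{-k} X$ has trivial continuous cohomology. By dévissage through the long exact sequences attached to the filtration, every finite step $\Fil^{-n}X/\Fil^0 X$ has vanishing cohomology as well. Passing to the filtered colimit yields $H^j(\mathscr{G}_K, \XdR((D,\Fil))) = 0$ for all $j$, and combining this with the first paragraph completes the proof.

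The main technical point I expect to require care is the interchange of continuous Galois cohomology with the filtered colimit $\varinjlim_n \Fil^{-n}X/\Fil^0 X$. Unlike for discrete cohomology, this commutation is not automatic; one has to exploit the natural topology on each $t$-adic layer of $\BdR$, together with a Mittag-Leffler or $\varprojlim^{1}$-vanishing argument, in order to pass the vanishing legitimately through the colimit. This is standard in $p$-adic Hodge theory, but is the step I would handle with the most care.
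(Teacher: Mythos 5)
Your proof is correct and follows essentially the same strategy as the paper: reduce to showing $H^{j}(\mathbf{X}_{\mathrm{dR}})=0$ via the fundamental exact sequence, then obtain the vanishing by a dévissage into Tate twists $\mathbf{C}_p(n)$ with $n\neq 0$. The only difference is organizational: the paper filters $\mathrm{Fil}^{0}(D_K\otimes_K\mathbf{B}_{\mathrm{dR}})$ from below by modules $M_i$ that stabilize at $D_K\otimes_K t^{-i}\mathbf{B}_{\mathrm{dR}}^+$ for $i$ large, so the only infinite object left is the cokernel $D_K\otimes_K(\mathbf{B}_{\mathrm{dR}}/t^{-i}\mathbf{B}_{\mathrm{dR}}^+)$, whose cohomological triviality is already a standard fact; this sidesteps the colimit-commutation issue that you (correctly) flag as needing care in your version.
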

\begin{proof}
We have  
$$\mathrm{Fil}^{0}(D_{K}\otimes_{K}\mathbf{B}_{\mathrm{dR}}) =\sum_{i\geq 0} \mathrm{Fil}^{i}(D_{K})\otimes_{K}t^{-i}\mathbf{B}_{\mathrm{dR}}^{+}.$$ 
Consider $M_{i} = \sum_{j> i} \mathrm{Fil}^{j}(D_{K})\otimes_{K}t^{-j}\mathbf{B}_{\mathrm{dR}}^{+} + D_{K}\otimes_{K}t^{-i}\mathbf{B}_{\mathrm{dR}}^{+}$ for $i \geq 0$. The module $M_{0}$ is $\mathrm{Fil}^{0}(D_{K}\otimes_{K}\mathbf{B}_{\mathrm{dR}})$. The quotient $M_{i+1}/M_{i}$ is direct sum of $t^{-i-1}\mathbf{B}_{\mathrm{dR}}^{+}/t^{-i}\mathbf{B}_{\mathrm{dR}}^{+}$. The Galois modules $t^{-i-1}\mathbf{B}_{\mathrm{dR}}/t^{-i}\mathbf{B}_{\mathrm{dR}}^{+}$ are cohomologically trivial. Moreover $M_{i}=D_{K}\otimes_{K}t^{-i}\mathbf{B}_{\mathrm{dR}}^{+}$ for $i$ large enough. The quotient $(D_{K}\otimes_{K}\mathbf{B}_{\mathrm{dR}})/M_{i}$ is direct sum of $\mathbf{B}_{\mathrm{dR}}/t^{-i}\mathbf{B}_{\mathrm{dR}}^{+}$. The latter is cohomologically trivial. Hence $\mathbf{X}_{\mathrm{dR}}((D,\mathrm{Fil}))$ is cohomologically trivial. Then the lemma follows from the fundamental exact sequence.
\end{proof}

Let $D$ be a $(\varphi,N)$-module, $\mathrm{Fil}_{1}$,$\mathrm{Fil}_{2}$ two decreasing filtrations on it. We call they are of the same filtration type if for any integer $i$ there exists an integer $j$ such that
$$\mathrm{Fil}^{i}_{1}(D_{K}) = \mathrm{Fil}^{j}_{2}(D_{K}).$$
Note that in this case, the number of Hodge-Tate weights of both filtration are the same.

\begin{lem}\label{lem:indepofwt}
Let $D$ be a $(\varphi,N)$-module, $\mathrm{Fil}_{1}$,$\mathrm{Fil}_{2}$ two admissible filtrations of the same filtration type on it. Let 
$$I_{1} =\{i_{n} < i_{n+1} < \cdots < i_{0} < i_{1} < \cdots < i_{m} \}$$
$$I_{2} =\{j_{n} < j_{n+1} < \cdots < j_{0} < j_{1} < \cdots < j_{m} \}$$
be the set of Hodge-Tate weights of $\mathrm{Fil}_{1}$ and $\mathrm{Fil}_{2}$, respectively. Suppose further that $I_{1}$, $I_{2}$ satisfy the following conditions:
\begin{enumerate}
\item For $n\leq l < 0$, $0> i_{l} \geq j_{l}$.
\item For $0 \leq l \leq m$, $j_{l} \geq i_{l} \geq 0$.
\end{enumerate}
Then the image of $\Hk{k}{\mathbf{V}_{\mathrm{st}}((D,\mathrm{Fil}_{1}))}$ in $\Hk{k}{\mathbf{X}_{\mathrm{st}}(D)}$ is the same as $\Hk{k}{\mathbf{V}_{\mathrm{st}}((D,\mathrm{Fil}_{2}))}$ for all $k$.
\end{lem}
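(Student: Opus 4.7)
By the fundamental exact sequence of Theorem~\ref{thm:strep} applied to both $(D,\Fil_{1})$ and $(D,\Fil_{2})$, which yield the same middle term $\Xst(D)$, the long exact sequence in continuous Galois cohomology identifies the image of $\Hk{k}{\Vst((D,\Fil_{i}))}$ in $\Hk{k}{\Xst(D)}$ with the kernel of the natural map $\Hk{k}{\Xst(D)} \to \Hk{k}{\XdR((D,\Fil_{i}))}$. The problem therefore reduces to showing that these two kernels agree for all $k$ and $i=1,2$.

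Since the two filtrations share the same underlying chain of $L\otimes_{\mathbf{Q}_{p}} K$-submodules in $D_{K}$, I would choose an $L$-basis $\{e_{s}\}$ of $D_{K}$ adapted to this common chain (working $\sigma$-by-$\sigma$ via $L\otimes_{\mathbf{Q}_{p}} K=\prod_{\sigma}L$). Each basis vector $e_{s}$ sits at a definite level $l(s)$ in the chain, with weight $w_{s}^{(1)}=i_{l(s)}$ under $\Fil_{1}$ and $w_{s}^{(2)}=j_{l(s)}$ under $\Fil_{2}$. A direct computation in this basis yields
\[
\Fil_{i}^{0}(D_{K}\otimes_{K}\BdR) = \bigoplus_{s} e_{s}\otimes t^{-w_{s}^{(i)}}\BdR^{+},\qquad \XdR((D,\Fil_{i})) = \bigoplus_{s} e_{s}\otimes\bigl(\BdR/t^{-w_{s}^{(i)}}\BdR^{+}\bigr)
\]
as $\GK$-modules. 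The key consequence of hypotheses (i)-(ii) is that $w_{s}^{(1)}\geq 0 \iff w_{s}^{(2)}\geq 0$ for every $s$: both weights have the same sign, determined by whether $l(s)\geq 0$ or $l(s)<0$.

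A filtration argument in the spirit of Lemma~\ref{lem:indepofwt1} shows that $\BdR/t^{a}\BdR^{+}$ is cohomologically trivial for $a\leq 0$, since its graded pieces are Tate twists $\mathbf{C}_{p}(m)$ with $m\leq -1$. For $a\geq 1$, the same type of argument identifies $\Hk{k}{\BdR/t^{a}\BdR^{+}}$ canonically with $\Hk{k}{\mathbf{C}_{p}}$ via the unique weight-$0$ graded piece $\BdR^{+}/t\BdR^{+}$, and the natural quotient $\BdR\twoheadrightarrow\BdR/t^{a}\BdR^{+}$ induces an isomorphism on cohomology (its kernel $t^{a}\BdR^{+}$ has only cohomologically trivial Tate-twist graded pieces $\mathbf{C}_{p}(m)$ with $m\geq 1$); this isomorphism is compatible across all $a\geq 1$ thanks to the natural transition maps $\BdR/t^{a'}\BdR^{+}\twoheadrightarrow\BdR/t^{a}\BdR^{+}$ for $a'\geq a$.

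Summing component-wise in the adapted basis, the composite $\Hk{k}{\Xst(D)} \to \Hk{k}{D_{K}\otimes_{K}\BdR} \to \Hk{k}{\XdR((D,\Fil_{i}))}$ becomes a map into $\bigoplus_{s:\,w_{s}^{(i)}<0} e_{s}\otimes\Hk{k}{\mathbf{C}_{p}}$ that depends only on the sign pattern of the weights $w_{s}^{(i)}$; by the key observation, this sign pattern is independent of $i$, so the two kernels coincide. The main obstacle I anticipate is the careful diagram chase needed to ensure that the canonical identifications $\Hk{k}{\BdR/t^{a}\BdR^{+}}\cong\Hk{k}{\mathbf{C}_{p}}$ genuinely yield the same composite map across $i=1,2$ (not merely isomorphic ones), together with appropriate bookkeeping for the $L\otimes_{\mathbf{Q}_{p}} K$-decomposition when the Hodge-Tate weights depend on the embedding $\sigma$.
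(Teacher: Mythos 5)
Your reduction to comparing the kernels of $\Hk{k}{\mathbf{X}_{\mathrm{st}}(D)}\to\Hk{k}{\mathbf{X}_{\mathrm{dR}}((D,\mathrm{Fil}_{i}))}$ matches the paper, but after that your route diverges. The paper never chooses a basis: it works with the submodule $M_{0}=\mathrm{Fil}_{1}^{0}(D_{K}\otimes_{K}\mathbf{B}_{\mathrm{dR}})\cap\mathrm{Fil}_{2}^{0}(D_{K}\otimes_{K}\mathbf{B}_{\mathrm{dR}})$, through whose quotient both maps factor, and then interpolates from $M_{0}$ up to each $\mathrm{Fil}_{i}^{0}$ by a chain whose successive quotients are direct sums of $t^{-j_{l}}\mathbf{B}_{\mathrm{dR}}^{+}/t^{-i_{l}}\mathbf{B}_{\mathrm{dR}}^{+}$ --- finite-length modules with cohomologically trivial graded pieces, so no inverse-limit subtleties ever arise. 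You instead diagonalize $D_{K}$ by an adapted basis $\{e_{s}\}$, reduce to rank-one pieces $\mathbf{B}_{\mathrm{dR}}\twoheadrightarrow\mathbf{B}_{\mathrm{dR}}/t^{-w_{s}^{(i)}}\mathbf{B}_{\mathrm{dR}}^{+}$, and observe that the kernel on $\Hk{k}{\mathbf{B}_{\mathrm{dR}}}$ depends only on whether $w_{s}^{(i)}\geq 0$ or $<0$. The canonicity worry you flag at the end does resolve: for $1\leq a_{1}\leq a_{2}$ the triangle $\mathbf{B}_{\mathrm{dR}}\twoheadrightarrow\mathbf{B}_{\mathrm{dR}}/t^{a_{2}}\mathbf{B}_{\mathrm{dR}}^{+}\twoheadrightarrow\mathbf{B}_{\mathrm{dR}}/t^{a_{1}}\mathbf{B}_{\mathrm{dR}}^{+}$ commutes, the second arrow is a cohomology isomorphism (its kernel $t^{a_{1}}\mathbf{B}_{\mathrm{dR}}^{+}/t^{a_{2}}\mathbf{B}_{\mathrm{dR}}^{+}$ being cohomologically trivial), hence the two kernels inside $\Hk{k}{\mathbf{B}_{\mathrm{dR}}}$ agree on the nose --- the comparison is pinned down by the common source, not by an abstract isomorphism of targets. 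Your approach makes transparent that only the ``sign pattern'' of the Hodge--Tate weights matters (the ordering in hypotheses (i)--(ii) is not strictly needed), which is a mild gain in clarity; the paper's intersection argument is slightly more robust in that it never invokes the cohomology of $\mathbf{B}_{\mathrm{dR}}$ itself nor the assertion $\Hk{k}{t^{a}\mathbf{B}_{\mathrm{dR}}^{+}}=0$, which quietly rests on an inverse-limit/Mittag--Leffler argument. Both proofs are correct; yours is a valid and somewhat more explicit alternative.
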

\begin{proof}
By the fundamental exact sequence, we have the image of $\Hk{k}{\mathbf{V}_{\mathrm{st}}((D,\mathrm{Fil}_{i}))}$ in $\Hk{k}{\mathbf{X}_{\mathrm{st}}(D)}$ is the kernel of $\Hk{k}{\mathbf{X}_{\mathrm{st}}(D)}  \to \Hk{k}{\mathbf{X}_{\mathrm{st}}((D,\mathrm{Fil}_{i}))}$ for $i =1 ,2$. Consider the module 
$$M_{0} = \sum_{n \leq l < 0} \mathrm{Fil}^{j_{l}}(D_{K})\otimes_{K}t^{-j_{l}}\mathbf{B}_{\mathrm{dR}}^{+} + \sum_{0 \leq l \leq m} \mathrm{Fil}^{i_{l}}(D_{K})\otimes_{K}t^{-i_{l}}\mathbf{B}_{\mathrm{dR}}^{+}.$$
It is the intersection of $\mathrm{Fil}_{1}^{0}(D_{K}\otimes_{K}\mathbf{B}_{\mathrm{dR}})$ and $\mathrm{Fil}_{2}^{0}(D_{K}\otimes_{K}\mathbf{B}_{\mathrm{dR}})$. Then by the fundamental exact sequence, $\Hk{k}{\mathbf{X}_{\mathrm{st}}(D)}  \to \Hk{k}{\mathbf{X}_{\mathrm{st}}((D,\mathrm{Fil}_{i}))}$ factors through $\Hk{k}{(D_{K}\otimes_{K}\mathbf{B}_{\mathrm{dR}})/M_{0}}$ for $i =1,2$ and all $k$. 

Consider the modules
$$M_{l} = \sum_{n - l \leq a < 0} \mathrm{Fil}^{j_{a}}(D_{K})\otimes_{K}t^{-j_{a}}\mathbf{B}_{\mathrm{dR}}^{+}+\sum_{\substack{0 \leq a \leq m\\ n \leq a < n- l}} \mathrm{Fil}^{i_{a}}(D_{K})\otimes_{K}t^{-i_{a}}\mathbf{B}_{\mathrm{dR}}^{+}$$
for $n\leq l < 0$. The module $M_{n}$ is $\mathrm{Fil}_{1}^{0}(D_{K}\otimes_{K}\mathbf{B}_{\mathrm{dR}})$. The quotient $M_{l}/M_{l+1}$ is direct sum of $t^{-j_{l}}\mathbf{B}_{\mathrm{dR}}^{+}/t^{-i_{l}}\mathbf{B}_{\mathrm{dR}}^{+}$ for  $n\leq l < 0$. The latter one is cohomologically trivial.  So $M_{n}/M_{0}$ is all cohomologically trivial. Then the maps $\Hk{k}{(D_{K}\otimes_{K}\mathbf{B}_{\mathrm{dR}})/M_{0}} \to \Hk{k}{(D_{K}\otimes_{K}\mathbf{B}_{\mathrm{dR}})/M_{n}}$ are isomorphisms for all $k$.

Similarly, consider the modules
$$M_{l} = \sum_{\substack{n  \leq a < 0\\ m-l < a \leq m}} \mathrm{Fil}^{j_{a}}(D_{K})\otimes_{K}t^{-j_{a}}\mathbf{B}_{\mathrm{dR}}^{+}+\sum_{0 \leq a \leq m-l} \mathrm{Fil}^{i_{a}}(D_{K})\otimes_{K}t^{-i_{a}}\mathbf{B}_{\mathrm{dR}}^{+}$$
for $0 < l \leq m+1$. The module $M_{m+1}$ is $\mathrm{Fil}_{2}^{0}(D_{K}\otimes_{K}\mathbf{B}_{\mathrm{dR}})$. We also have that $$\Hk{k}{(D_{K}\otimes_{K}\mathbf{B}_{\mathrm{dR}})/M_{0}} \to \Hk{k}{(D_{K}\otimes_{K}\mathbf{B}_{\mathrm{dR}})/M_{m+1}}$$ 
are isomorphisms for all $k$. So the lemma holds.
\end{proof}

\section{Monodromy modules $(D_{\alpha},\mathrm{Fil}_{m,k,\mathscr{L}})$ of rank two} ~\vskip 1pt

In this section we define  a certain kind of admissible filtered $(\varphi,N)$-modules $(D, \mathrm{Fil})$ of rank $2$. They are called monodromy modules.  

Let $\alpha$ be an element in $L^{\times}$, $m=(m_{\sigma})_{\sigma}$ and $k=(k_{\sigma})_{\sigma}$   elements in $\displaystyle{\oplus_{\sigma}}\mathbf{Z}$ and $\mathscr{L}=(\mathscr{L}_{\sigma})$ an element in $L\otimes_{\mathbf{Q}_{p}}K$. Suppose further that they satisfy the following conditions: 
\begin{itemize}
\item $k_{\sigma} > m_{\sigma}$.
\item $ e\cdot(2v_{p}(\alpha)+f) = \sum_{\sigma}(k_{\sigma}+m_{\sigma})$.
\item $ e\cdot v_{p}(\alpha) \geq \sum_{\sigma}m_{\sigma}$.
\end{itemize}

Define $(D_{\alpha}, \mathrm{Fil}_{m, k,\mathscr{L}})$ as follows:

$$D_{\alpha} := (L\otimes_{\mathbf{Q}_{p}}K_{0})\cdot e_{2} \oplus (L\otimes_{\mathbf{Q}_{p}}K_{0}) \cdot e_{1}$$
$$\begin{array}{cc}
\varphi(e_{2}) = (p\alpha,p,\cdots,p) e_{2}  & \varphi(e_{1}) = (\alpha,1,\cdots,1) e_{1} \\
N(e_{2}) = e_{1} & N(e_{1}) = 0
\end{array}$$
$$\mathrm{Fil}_{m, k,\mathscr{L}}^{i}(D_{\alpha,K}) =\oplus_{\sigma} \mathrm{Fil}_{m_{\sigma}, k_{\sigma},\mathscr{L}_{\sigma}}^{i}((D_{\alpha,K})_{\sigma})$$
$$\mathrm{Fil}_{m_{\sigma},k_{\sigma},\mathscr{L}_{\sigma}}^{i}((D_{\alpha,K})_{\sigma}):=\left\{ \begin{array}{ll}
0 & i > k_{\sigma} \\
(L\otimes_{\mathbf{Q}_{p}}K)_{\sigma} \cdot (e_{2}+\mathscr{L}_{\sigma} e_{1}) & m_{\sigma} < i \leq k_{\sigma} \\
(D_{\alpha,K})_{\sigma} & i \leq m_{\sigma}.
\end{array}
\right.
$$
It is easy to check this module is admissible. We call the filtered $(\varphi,N)$-module as a monodromy $(\varphi,N)$-module of rank $2$. This kind of modules is also considered by B. Schraen. In Proposition 3.1 of \cite{S} he classified all two dimensional semi-stable non crystalline Galois representations with non degenerated weights. Under his terminology, the representations we consider here are just with two dimensional semi-stable non crystalline Galois representations with non degenerated weights and $S= \emptyset$. The parameter $\mathscr{L}$ is called the Fontaine's $\mathscr{L}$-invariant of this module. The Galois representation associated to it is called as a two dimensional Galois representation of monodromy type. The parameter $\mathscr{L}$ is called Fontaine's $\mathscr{L}$-invariant $\mathscr{L}$ of the representation. Up to a twist of a character, we may assume that $m =0 $.

For $\mathscr{L} \neq 0$ and $ e\cdot v_{p}(\alpha)+n \geq \sum_{\mathscr{L}_{\sigma} \neq 0 }m_{\sigma}+\sum_{\mathscr{L}_{\sigma} = 0 }k_{\sigma}$, we can also define an admissible filtered $(\varphi,N)$-module $(D_{\alpha,p\alpha}, \mathrm{Fil}_{m, k,\mathscr{L}})$ by
$$\begin{array}{cc}
\varphi(e_{2}) = (p\alpha,p,\cdots,p) e_{2}  & \varphi(e_{1}) = (\alpha,1,\cdots,1) e_{1} \\
N(e_{2}) = 0 & N(e_{1}) = 0
\end{array}$$
$$\mathrm{Fil}_{m, k,\mathscr{L}}^{i}(D_{\alpha,K}) =\oplus_{\sigma} \mathrm{Fil}_{m_{\sigma}, k_{\sigma},\mathscr{L}_{\sigma}}^{i}((D_{\alpha,K})_{\sigma})$$
$$\mathrm{Fil}_{m_{\sigma},k_{\sigma},\mathscr{L}_{\sigma}}^{i}((D_{\alpha,K})_{\sigma}):=\left\{ \begin{array}{ll}
0 & i > k_{\sigma} \\
(L\otimes_{\mathbf{Q}_{p}}K)_{\sigma} \cdot (e_{2}+\mathscr{L}_{\sigma} e_{1}) & m_{\sigma} < i \leq k_{\sigma} \\
(D_{\alpha,K})_{\sigma} & i \leq m_{\sigma}.
\end{array}
\right.
$$
It is easy to check that two such modules $(D_{\alpha,p\alpha}, \mathrm{Fil}_{m, k,\mathscr{L}})$ and $(D_{\alpha',p\alpha'}, \mathrm{Fil}_{m', k',\mathscr{L}'})$ are isomorphic if and only if $\alpha =\alpha'$, $m =m'$, $k =k'$ and there exists a $b \in L^{\times}$ such that $\mathscr{L}' =b\mathscr{L}'$.

\section{The semi-stable representations $W_{\mathscr{L},k}$} ~\vskip 1pt

\subsection{Definition and basic properties} ~\vskip 1pt

Consider the object $(D, \mathrm{Fil}_{\mathscr{L},k})$ in $\mathbf{MF}^{\mathrm{ad}}_{K,L}(\varphi, N)$ defined as follows where $k=(k_{\sigma})_{\sigma}  \in \oplus_{\sigma}\mathbf{Z}$ and $\mathscr{L} \in L\otimes_{\mathbf{Q}_{p}} K$:
$$D := (L\otimes_{\mathbf{Q}_{p}}K_{0})\cdot f_{1} \oplus (L\otimes_{\mathbf{Q}_{p}}K_{0})\cdot f_{2} \oplus (L\otimes_{\mathbf{Q}_{p}}K_{0}) \cdot f_{3}$$
$$\begin{array}{cccc}
\varphi(f_{1}) = pf_{1},& \varphi(f_{2}) = f_{2} &\mathrm{and}& \varphi(f_{3}) = p^{-1}f_{3} \\
N(f_{1}) = 2f_{2},& N(f_{2}) = f_{3} &\mathrm{and} &N(f_{3}) = 0 \\
g_{\mathscr{L},1}:=f_{1} +2 \mathscr{L} f_{2} + \mathscr{L}^{2} f_{3} ,& g_{\mathscr{L}, 2} := f_{2} + \mathscr{L} f_{3,} &\mathrm{and}& g_{\mathscr{L}, 3} := f_{3}
\end{array}$$
$$\mathrm{Fil}_{\mathscr{L},k}^{i}(D_{K}) =\oplus_{\sigma} \mathrm{Fil}_{\mathscr{L}_{\sigma}, k_{\sigma}}^{i}((D_{K})_{\sigma})$$

$$\mathrm{Fil}_{\mathscr{L}_{\sigma}, k_{\sigma}}^{i}(D_{\sigma}) := \left\{ \begin{array}{ll}
0 & i > k_{\sigma} \\
(L\otimes_{\mathbf{Q}_{p}}K)_{\sigma}\cdot g_{\mathscr{L},1} & 0 < i \leq k_{\sigma} \\
(L\otimes_{\mathbf{Q}_{p}}K)_{\sigma}\cdot g_{\mathscr{L},1} \oplus (L\otimes_{\mathbf{Q}_{p}}K)_{\sigma}\cdot g_{\mathscr{L},2}  & -k_{\sigma}< i \leq 0 \\
(D\otimes_{K_{0}}K)_{\sigma} & i \leq -k_{\sigma}
\end{array}
\right.$$
Denote the semi-stable representation associated to it by $W_{\mathscr{L},k}$.
\begin{prop}\label{prop:end}
Let $(D_{\alpha}, \mathrm{Fil}_{0,k,\mathscr{L}})$ be a monodromy filtered $(\varphi,N)$-module defined in the last section. The representation $W_{\mathscr{L},k}$ is isomorphic to $\mathrm{End}^{0}(\mathbf{V}_{\mathrm{st}}((D_{\alpha}, \mathrm{Fil}_{k,\mathscr{L}})))$.
\end{prop}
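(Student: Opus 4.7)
The plan is to exploit the Tannakian equivalence between semi-stable representations and admissible filtered $(\varphi,N)$-modules (Theorem \ref{thm:strep}), which commutes with tensor products, duals and kernels, and hence with the formation of trace-zero endomorphisms. Thus
$$\mathbf{D}_{\mathrm{st}}(\mathrm{End}^{0}(\mathbf{V}_{\mathrm{st}}((D_{\alpha},\mathrm{Fil}_{0,k,\mathscr{L}}))))\cong\mathrm{End}^{0}(D_{\alpha},\mathrm{Fil}_{0,k,\mathscr{L}})$$
as admissible filtered $(\varphi,N)$-modules, and the proposition reduces to exhibiting an isomorphism of such modules $\mathrm{End}^{0}(D_{\alpha},\mathrm{Fil}_{0,k,\mathscr{L}})\cong(D,\mathrm{Fil}_{\mathscr{L},k})$, after which one applies $\mathbf{V}_{\mathrm{st}}$.

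I would first pin down the $(\varphi,N)$-structure. Take the standard $\mathfrak{sl}_{2}$-basis $E,H,F$ of $\mathrm{End}^{0}(D_{\alpha})$ over $L\otimes_{\mathbf{Q}_{p}}K_{0}$: in the ordered basis $(e_{1},e_{2})$ these are $E=E_{12}$, $H=E_{11}-E_{22}$, $F=E_{21}$. Since $N$ acts on endomorphisms by commutation with the matrix of the monodromy operator on $D_{\alpha}$ and $\varphi$ acts by conjugation with the Frobenius matrix, a direct calculation gives $N\cdot E=0$, $N\cdot H=-2E$, $N\cdot F=H$ together with $\varphi(E)=p^{-1}E$, $\varphi(H)=H$, $\varphi(F)=pF$; note in particular that the unramified factor $\alpha$ cancels in the conjugation, so $\mathrm{End}^{0}(D_{\alpha})$ is independent of $\alpha$ as a $(\varphi,N)$-module. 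Setting $f_{1}:=2F$, $f_{2}:=H$, $f_{3}:=-2E$ then realises exactly the $(\varphi,N)$-structure prescribed on $(D,\mathrm{Fil}_{\mathscr{L},k})$.

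It remains to match the Hodge filtrations. Since both filtrations decompose as $L\otimes_{\mathbf{Q}_{p}}K$-direct sums over the embeddings $\sigma$, I would argue one $\sigma$ at a time. At the $\sigma$-component, $\mathrm{Fil}_{0,k,\mathscr{L}}$ on $D_{\alpha,\sigma}$ has jumps $0$ and $k_{\sigma}$, with the middle step the line $Lv_{\sigma}$ through $v_{\sigma}:=e_{2}+\mathscr{L}_{\sigma}e_{1}$. The induced filtration on $\mathrm{End}^{0}$ is
$$\mathrm{Fil}^{i}\mathrm{End}^{0}=\{A:A(\mathrm{Fil}^{j})\subseteq\mathrm{Fil}^{i+j}\text{ for all }j\}.$$
For $0<i\leq k_{\sigma}$ this forces $A$ to map $D_{\alpha,\sigma}$ into $Lv_{\sigma}$ and to annihilate $v_{\sigma}$; expanding such $A$ in the basis $E,H,F$, it is a scalar multiple of $F+\mathscr{L}_{\sigma}H-\mathscr{L}_{\sigma}^{2}E=\tfrac{1}{2}g_{\mathscr{L}_{\sigma},1}$. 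For $-k_{\sigma}<i\leq 0$ the condition reduces to the stability of $Lv_{\sigma}$ under $A$, and the two-dimensional trace-zero space of such endomorphisms is spanned by $g_{\mathscr{L}_{\sigma},1}$ and $g_{\mathscr{L}_{\sigma},2}$. For $i\leq -k_{\sigma}$ the filtration step is the entire $\sigma$-component. These jumps coincide exactly with those of $\mathrm{Fil}_{\mathscr{L},k}$, completing the identification.

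The main obstacle I foresee is essentially bookkeeping: keeping track of the semi-linearity of $\varphi$ over $L\otimes_{\mathbf{Q}_{p}}K_{0}$ (where Frobenius on $K_{0}$ permutes components) alongside the fact that the Hodge filtration lives over $L\otimes_{\mathbf{Q}_{p}}K$, and ensuring that the comparison is natural across all embeddings at once rather than at a single $\sigma$. Once the indexing conventions are nailed down, both filtered $(\varphi,N)$-modules are characterised by the same invariants, namely an $\mathfrak{sl}_{2}$-type rank-three $L\otimes_{\mathbf{Q}_{p}}K_{0}$-module with the prescribed Frobenius slopes together with Hodge-Tate weights $\{-k_{\sigma},0,k_{\sigma}\}$ at each embedding $\sigma$, so the comparison goes through uniformly.
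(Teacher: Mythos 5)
Your proposal is correct and, at bottom, is the same argument the paper gives: compute the filtered $(\varphi,N)$-module $\mathrm{End}^{0}((D_{\alpha},\mathrm{Fil}_{0,k,\mathscr{L}}))$ explicitly, match it to the defining data of $(D,\mathrm{Fil}_{\mathscr{L},k})$, and invoke the (Tannakian) compatibility of $\mathbf{V}_{\mathrm{st}}$ with duals, tensors and the trace-zero kernel. The only cosmetic difference is that the paper phrases the computation by first writing out the dual $(D_{\alpha}^{*},\mathrm{Fil}^{*})$ and then sending $f_{1}\mapsto e_{2}\otimes e_{1}^{*}$, $f_{2}\mapsto \tfrac{1}{2}(e_{1}\otimes e_{1}^{*}-e_{2}\otimes e_{2}^{*})$, $f_{3}\mapsto -e_{1}\otimes e_{2}^{*}$, whereas you use the $\mathfrak{sl}_{2}$-basis $E,H,F$ and the rescaled identification $f_{1}=2F$, $f_{2}=H$, $f_{3}=-2E$. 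Since your map is exactly twice the paper's, both intertwine $\varphi$ and $N$ and induce the same filtration lines $L\cdot g_{\mathscr{L}_\sigma,i}$ at each embedding, so both are valid; there is no substantive difference. One small remark worth keeping in mind for a write-up: you describe the $\varphi$-action on endomorphisms as ``conjugation with the Frobenius matrix,'' which is correct here only because $E,H,F$ are constant matrices — in general $\varphi$ is semi-linear over $L\otimes_{\mathbf{Q}_{p}}K_{0}$ and the conjugation carries a Frobenius twist on coefficients, so it is cleaner (and is what the paper does) to carry out the computation on simple tensors $e_i\otimes e_j^{*}$ where the semi-linearity is transparent.
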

\begin{proof}
The dual $(D_{\alpha}^{*}, \mathrm{Fil}_{0,k,\mathscr{L}}^{*})$ of $(D_{\alpha}, \mathrm{Fil}_{0,k,\mathscr{L}})$ is as follows:
$$D_{\alpha}^{*} := (L\otimes_{\mathbf{Q}_{p}}K_{0})\cdot e_{2}^{*} \oplus (L\otimes_{\mathbf{Q}_{p}}K_{0}) \cdot e_{1}^{*}$$
$$\begin{array}{cc}
\varphi(e_{2}^{*}) = (p^{-1}\alpha^{-1},p^{-1},\cdots, p^{-1}) e_{2}^{*}  & \varphi(e_{1}^{*}) = (\alpha^{-1},1,\cdots,1) e_{1}^{*} \\
N(e_{2}^{*}) = 0 & N(e_{1}^{*}) = - e_{2}^{*}
\end{array}$$
$$\mathrm{Fil}_{0,k,\mathscr{L}}^{i}(D^{*}_{\alpha,K}) =\oplus_{\sigma} \mathrm{Fil}^{*,i}_{0, k_{\sigma},\mathscr{L}_{\sigma}}((D^{*}_{\alpha,K})_{\sigma})$$

$$\mathrm{Fil}_{0,k_{\sigma},\mathscr{L}_{\sigma}}^{*,i}(D_{\alpha,\sigma}) := \left\{ \begin{array}{ll}
0 & i > 0 \\
(L\otimes_{\mathbf{Q}_{p}}K) \cdot (e_{1}^{*} - \mathscr{L}_{\sigma} e_{2}^{*}) & -k_{\sigma} < i \leq 0 \\
(D_{\alpha}\otimes_{K_{0}}K)_{\sigma} & i \leq -k_{\sigma}
\end{array}
\right.
$$ where $\{e_{2}^{*}, e_{1}^{*}\}$ is the dual basis of $\{e_{2}, e_{1}\}$. Then mapping $f_{1}$ to $e_{2} \otimes e_{1}^{*}$, $f_{2}$ to $\frac{1}{2}(e_{1} \otimes e_{1}^{*} - e_{2} \otimes e_{2}^{*})$ and $f_{3}$ to $-e_{1}\otimes e_{2}^{*}$ , we get an isomorphism of filtered $(\varphi,N)$-modules between $(D, \mathrm{Fil}_{\mathscr{L},k})$ and $\mathrm{End}^{0}((D_{\alpha}, \mathrm{Fil}_{0,k,\mathscr{L}}))$. So the associated Galois representations are also isomorphic.
\end{proof}

The $(\varphi,N)$ module $D_{\alpha}$ has the natural filtration of $(\varphi,N)$-module $D_{\alpha}^{N=0} \subseteq D_{\alpha}$. It induces the following filtration on $D$:
\begin{gather}\label{eqn:fil}
D_{0}=0\subset D_{1}= D^{N=0} \subset D_{2}=D^{N^{2}=0}  \subset D_{3}=D.
\end{gather}
The free $L\otimes_{\mathbf{Q}_{p}}K_{0}$ modules $D_{i}$($i=1,2,3$) are spanned by $\{f_{j}|1 \leq j \leq i\}$. The quotient modules $D/D_{1}$ and $D/D_{2}$ are $\mathrm{Hom}_{L\otimes_{\mathbf{Q}_{p}}K_{0}}(D_{\alpha}^{N=0},D_{\alpha})$ and $\mathrm{Hom}_{L\otimes_{\mathbf{Q}_{p}}K_{0}}(D_{\alpha}^{N=0},D_{\alpha}/D_{\alpha}^{N=0})$, respectively. So the gradient $D_{2}/D_{1}$ is $\mathrm{End}_{L\otimes_{\mathbf{Q}_{p}}K_{0}}(D_{\alpha}^{N=0})$.

Then we consider the $\mathbf{B}_{e,L}$-module $\mathbf{X}_{\mathrm{st}}(D)$. It has the following filtration
$$0\subset \mathbf{X}_{\mathrm{st}}(D_{1}) \subset \mathbf{X}_{\mathrm{st}}(D_{2})  \subset \mathbf{X}_{\mathrm{st}}(D).$$

We can compute the $\mathbf{B}_{e,L}$-module $\mathbf{X}_{\mathrm{st}}(D)$ as follows.
\begin{prop}
Let
\begin{gather*}
u_{1} = tf_{3}, \quad \quad u_{2}=f_{2}+uf_{3}, 
\intertext{and}
u_{3}=\frac{1}{t}(f_{1} + 2uf_{2} +u^{2}f_{3}).
\end{gather*}
Then $\{u_{j}|1 \leq j \leq i\}$ is a basis of the free $\mathbf{B}_{e,L}$-module $\mathbf{X}_{\mathrm{st}}(D_{i})$ ($i = 1, 2, 3$).
\end{prop}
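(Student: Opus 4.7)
The plan is to proceed in two stages: first verify by direct computation that each $u_i$ lies in $\mathbf{X}_{\mathrm{st}}(D_i) = (D_i \otimes_{K_0} \mathbf{B}_{\mathrm{st}})^{\varphi = 1, N = 0}$, then deduce the basis property inductively from the exactness of $\mathbf{X}_{\mathrm{st}}$. The invariance check is a short calculation using only $\varphi(t) = pt$, $\varphi(u) = pu$, $N(t) = 0$, $N(u) = -1$, the prescribed $\varphi$- and $N$-actions on $f_1, f_2, f_3$, multiplicativity of $\varphi$, and the Leibniz rule for $N$. The case $u_1 = t f_3$ is immediate; for $u_2 = f_2 + u f_3$ the combination $N(f_2) + N(u) f_3 = f_3 - f_3$ vanishes; for $u_3 = t^{-1}(f_1 + 2u f_2 + u^2 f_3)$ one computes
$$N(f_1 + 2uf_2 + u^2 f_3) = 2f_2 - 2f_2 + 2uf_3 - 2uf_3 = 0$$
and
$$\varphi(u_3) = (pt)^{-1}\bigl(pf_1 + 2(pu)f_2 + (pu)^2 p^{-1} f_3\bigr) = u_3.$$

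For the basis property, I would induct on $i$. Applying the exact functor $\mathbf{X}_{\mathrm{st}}$ (Proposition 5.1 of \cite{CF}, cited in Section 3) to the short exact sequence of $(\varphi, N)$-modules $0 \to D_{i-1} \to D_i \to D_i/D_{i-1} \to 0$ produces a short exact sequence of free $\mathbf{B}_{e,L}$-modules, reducing the inductive step to checking that the image of $u_i$ generates $\mathbf{X}_{\mathrm{st}}(D_i/D_{i-1})$. Each quotient $D_1$, $D_2/D_1$, $D_3/D_2$ is free of rank one over $L \otimes_{\mathbf{Q}_p} K_0$ with $N = 0$ and scalar Frobenius eigenvalue $p^{-1}$, $1$, $p$ respectively. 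Under $D_i/D_{i-1} \otimes_{K_0} \mathbf{B}_{\mathrm{st}} \cong \mathbf{B}_{\mathrm{st}, L}$, and using the standard identification $\mathbf{B}_{\mathrm{cris}}^{\varphi = p^n} = t^n \mathbf{B}_e$, the invariant submodule $\mathbf{X}_{\mathrm{st}}(D_i/D_{i-1})$ is freely generated over $\mathbf{B}_{e, L}$ by $t f_3$, $\bar f_2$, and $t^{-1} \bar f_1$ respectively. These are exactly the leading terms of $u_1, u_2, u_3$ modulo the preceding submodules, so the induction closes.

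No step should present a serious obstacle: stage one is a telescoping calculation, and stage two reduces cleanly to exactness of $\mathbf{X}_{\mathrm{st}}$ plus the standard structure of $\varphi$-eigenspaces in $\mathbf{B}_{\mathrm{cris}}$. The only mildly delicate point is tracking the $L \otimes_{\mathbf{Q}_p} K_0$-semilinear action on $\mathbf{B}_{\mathrm{st}}$ when identifying the graded pieces, but this collapses as soon as one writes $D_i/D_{i-1} \otimes_{K_0} \mathbf{B}_{\mathrm{st}} \cong \mathbf{B}_{\mathrm{st}, L}$.
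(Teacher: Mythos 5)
Your proof is correct, and stage one (the direct verification that each $u_j$ is $\varphi$-fixed and killed by $N$) is exactly what the paper does. Stage two, however, takes a genuinely different route. The paper observes that $\{u_j\}$ is also a $\mathbf{B}_{\mathrm{st},L}$-basis of $D_i\otimes_{K_0}\mathbf{B}_{\mathrm{st}}$ --- the change of basis from $\{f_j\}$ is triangular with the units $t$, $1$, $t^{-1}$ on the diagonal --- and then deduces the claim in one step: if $x=\sum a_j u_j\in\mathbf{X}_{\mathrm{st}}(D_i)$ with $a_j\in\mathbf{B}_{\mathrm{st},L}$, then $\varphi(x)=x$ and $N(x)=0$ together with uniqueness of coordinates force $\varphi(a_j)=a_j$ and $N(a_j)=0$, i.e.\ $a_j\in\mathbf{B}_{\mathrm{st},L}^{\varphi=1,N=0}=\mathbf{B}_{e,L}$. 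You instead induct along the filtration $D_1\subset D_2\subset D_3$, apply exactness of $\mathbf{X}_{\mathrm{st}}$ to the graded pieces, and identify each rank-one invariant module via $\mathbf{B}_{\mathrm{cris}}^{\varphi=p^n}=t^n\mathbf{B}_e$, checking that $u_i$ projects to a generator of $\mathbf{X}_{\mathrm{st}}(D_i/D_{i-1})$. Both arguments are sound; the paper's descent argument is shorter and avoids quoting exactness of $\mathbf{X}_{\mathrm{st}}$, while your filtration-by-filtration approach is more explicit about the graded structure and dovetails naturally with the way the filtrations $W_i$ and $\mathbf{X}_{\mathrm{st}}(D_i)$ are used in the rest of Section 5.
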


\begin{proof}
We can check directly $\{u_{j}|1 \leq j \leq i\}$  is contained in $\mathbf{X}_{\mathrm{st}}(D_{i})$ ($i = 1, 2, 3$). On the other hand it is a basis of the free $\mathbf{B}_{\mathrm{st},L}$-module  $D_{i}\otimes_{K_{0}}\mathbf{B}_{\mathrm{st}}$. So it is also a basis of the $\mathbf{B}_{e,L}$-module $\mathbf{X}_{\mathrm{st}}(D_{i})$ .
\end{proof}

As in the $(\varphi,N)$-modules level, the quotient modules $\mathbf{X}_{\mathrm{st}}(D)/\mathbf{X}_{\mathrm{st}}(D_{1})$ and $\mathbf{X}_{\mathrm{st}}(D)/\mathbf{X}_{\mathrm{st}}(D_{2})$ are $\mathrm{Hom}_{\mathbf{B}_{e,L}}(\mathbf{X}_{\mathrm{st}}(D_{\alpha}^{N=0}),\mathbf{X}_{\mathrm{st}}(D_{\alpha}))$ and $\mathrm{Hom}_{\mathbf{B}_{e,L}}(\mathbf{X}_{\mathrm{st}}(D_{\alpha}^{N=0}),\mathbf{X}_{\mathrm{st}}(D_{\alpha}/D_{\alpha}^{N=0}))$, respectively . The gradient $\mathbf{X}_{\mathrm{st}}(D_{2})/\mathbf{X}_{\mathrm{st}}(D_{1})$ is $\mathrm{End}_{\mathbf{}B_{e,L}}(\mathbf{X}_{\mathrm{st}}(D_{\alpha}^{N=0}))$. 

\subsection{Structure of $W_{\mathscr{L},\mathbf{1}}$} ~\vskip 1pt

In this subsection we study the structure of $W_{\mathscr{L},\mathbf{1}}$ in detail where $\mathbf{1} =(1,\cdots,1)$.

\begin{prop}
The filtration (\ref{eqn:fil}) is a filtration of admissible submodules.
\end{prop}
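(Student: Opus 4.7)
The plan is to verify that each $(D_i, \mathrm{Fil}|_{D_i})$ is an admissible filtered $(\varphi, N)$-module by reducing the problem to an admissibility check on the rank-one graded pieces of the filtration. Unwinding the definition $D_i = \ker N^i$, one has $D_1 = (L\otimes_{\mathbf{Q}_p} K_0) f_3$ and $D_2 = (L\otimes_{\mathbf{Q}_p} K_0)\{f_2, f_3\}$. Each $D_i$ is tautologically $N$-stable, and $\varphi$-stability is immediate from $N\varphi = p\varphi N$, which gives $\varphi(\ker N^i) \subseteq \ker N^i$. The weak-admissibility condition $t_N(D') \geq t_H(D'_K)$ for every $(\varphi, N)$-stable submodule $D' \subseteq D_i$ is inherited for free from the admissibility of the ambient $(D, \mathrm{Fil}_{\mathscr{L}, \mathbf{1}})$, since any such $D'$ is also a submodule of $D$ and the induced filtration on $D'_K$ agrees in either context.

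What remains is the equality $t_N(D_i) = t_H(D_{i, K})$, and by additivity of $t_N$ and $t_H$ on short exact sequences, it suffices to check admissibility on each graded piece $D_i/D_{i-1}$. Each is a rank-one $(L\otimes_{\mathbf{Q}_p} K_0)$-module with explicit $\varphi$-action: $D_1$ is generated by $f_3$ with $\varphi(f_3) = p^{-1} f_3$; $D_2/D_1$ is generated by the image of $f_2$ with $\varphi(f_2) = f_2$; and $D/D_2$ is generated by the image of $f_1$ with $\varphi(f_1) = p f_1$. The induced filtrations on these pieces are computed directly from the explicit forms $g_{\mathscr{L}, 1} = f_1 + 2\mathscr{L} f_2 + \mathscr{L}^2 f_3$ and $g_{\mathscr{L}, 2} = f_2 + \mathscr{L} f_3$: the nontrivial $f_1$-coefficient of $g_{\mathscr{L}, 1}$ forces $L_\sigma g_{\mathscr{L}, 1} \cap D_{2, K} = 0$ while its image in $D/D_2$ generates $(D/D_2)_{K, \sigma}$; and $g_{\mathscr{L}, 2}$ lies in $D_{2, K}$ and projects to $f_2$ modulo $D_{1, K}$. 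This yields single-jump Hodge filtrations at weights $+1$, $0$, $-1$ on every $\sigma$-factor of the three graded pieces, matching the Newton numbers of the corresponding rank-one $\varphi$-modules — in fact each such piece is an instance of the rank-one admissible example at the end of Section 3.

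The only real obstacle is the bookkeeping of these induced filtrations on the graded pieces — carefully tracking, for each of $g_{\mathscr{L}, 1}$ and $g_{\mathscr{L}, 2}$, whether it lies inside $D_{i, K}$ or survives nontrivially modulo $D_{i-1, K}$ — rather than any subtle cohomological input. Once the rank-one graded pieces are verified to be admissible, additivity of $t_N$ and $t_H$ delivers $t_N(D_i) = t_H(D_{i, K})$ for $i = 1, 2$, and the inherited weak admissibility of sub-submodules closes the argument.
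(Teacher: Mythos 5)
Your proof is correct and takes essentially the same approach as the paper's: the paper simply records $t_N(D_1) = t_H(D_1) = t_N(D_2) = t_H(D_2)$ and concludes, implicitly using the inheritance of the submodule inequality that you make explicit, while you route the equality $t_N(D_i) = t_H(D_i)$ through the rank-one graded pieces and additivity rather than computing it directly for $D_1$ and $D_2$. One small slip: you list the Hodge--Tate weights as $+1,0,-1$ after enumerating the graded pieces as $D_1,\ D_2/D_1,\ D/D_2$; the correct correspondence is $D_1\leftrightarrow -1$, $D_2/D_1\leftrightarrow 0$, $D/D_2\leftrightarrow +1$, which is what your own computation with $g_{\mathscr{L},1}$ and $g_{\mathscr{L},2}$ actually shows.
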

\begin{proof}
We have
$$t_{N}(D_{1})=t_{H}((D_{1}, \mathrm{Fil}_{\mathscr{L},\mathbf{1}})) = t_{N}(D_{2})=t_{H}((D_{2}, \mathrm{Fil}_{\mathscr{L},\mathbf{1}})) = n,$$
so they are all admissible.
\end{proof}

Let $W_{i} = \mathbf{V}_{\mathrm{st}}(D_{i})$. By the above proposition,  the Galois representation $W_{\mathscr{L},\mathbf{1}}$ also has a filtration:
$$W_{0}:=0\subset W_{1} \subset W_{2} \subset W_{3}:=W_{\mathscr{L},\mathbf{1}}.$$
We can give the filtration a more explicit description. Recall that we have the following exact sequence:
$$0\to L(1) \to \mathbf{B}_{\mathrm{cris},L}^{\varphi=p} \to \mathbf{B}_{\mathrm{dR},L}/\mathrm{Fil}^{1}(\mathbf{B}_{\mathrm{dR},L}) \to 0$$
Then there exists an element $v_{\mathscr{L}} \in \mathbf{B}_{\mathrm{cris},L}$ such that $v_{\mathscr{L}} - \mathscr{L} \in L\otimes_{\mathbf{Q}_{p}}\mathrm{Fil}^{1}(\mathbf{B}_{\mathrm{dR},L})$ and this element is uniquely determined up to an element $at$ where $a \in L$. 
\begin{prop}\label{prop:base}
Let
\begin{gather*}
v_{1} = tf_{3}, \quad \quad v_{2}=f_{2}+(u+v_{\mathscr{L}})f_{3}, 
\intertext{and}
v_{3}=\frac{1}{t}(f_{1} + 2(u+v_{\mathscr{L}})f_{2} +(u+v_{\mathscr{L}})^{2}f_{3}).
\end{gather*}
Then $\{v_{j}|1 \leq j \leq i\}$ spans $W_{i}$ ($i = 1, 2, 3$).
\end{prop}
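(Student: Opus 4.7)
The plan is to verify each $v_j$ lies in $W_j$ and then invoke a dimension count. My first step is to express the $v_j$ as $\mathbf{B}_{e,L}$-linear combinations of the basis $\{u_j\}$ from the preceding proposition. Using $tf_3 = u_1$, $u_2 = f_2 + uf_3$, and $u_3 = t^{-1}(f_1 + 2uf_2 + u^2 f_3)$, direct computation gives
$$v_1 = u_1,\qquad v_2 = u_2 + \tfrac{v_{\mathscr{L}}}{t}\,u_1,\qquad v_3 = u_3 + \tfrac{2 v_{\mathscr{L}}}{t}\,u_2 + \tfrac{v_{\mathscr{L}}^2}{t^2}\,u_1.$$
Since $\varphi(v_{\mathscr{L}}) = p v_{\mathscr{L}}$ and $\varphi(t) = pt$, the coefficient $v_{\mathscr{L}}/t$ is $\varphi$-invariant and hence lies in $\mathbf{B}_{e,L}$. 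So each $v_j \in \mathbf{X}_{\mathrm{st}}(D_j)$, and the transition matrix from $(u_j)$ to $(v_j)$ is unipotent lower triangular; in particular $\{v_1,\ldots,v_i\}$ is again a $\mathbf{B}_{e,L}$-basis of $\mathbf{X}_{\mathrm{st}}(D_i)$.

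Second, I check that each $v_j$ has vanishing image in $\mathbf{X}_{\mathrm{dR}}((D_j,\mathrm{Fil}_{\mathscr{L},\mathbf{1}}|_{D_j}))$, i.e.~lies in $\mathrm{Fil}^0(D_{j,K}\otimes_K \mathbf{B}_{\mathrm{dR}})$; via the fundamental exact sequence this places $v_j$ in $W_j$. Fix the standard embedding $\mathbf{B}_{\mathrm{st}} \hookrightarrow \mathbf{B}_{\mathrm{dR}}$ under which $u \in \mathrm{Fil}^1 \mathbf{B}_{\mathrm{dR}}$, and set $w := u + v_{\mathscr{L}} - \mathscr{L}$, which by the defining property of $v_{\mathscr{L}}$ lies in $\mathrm{Fil}^1 \mathbf{B}_{\mathrm{dR},L}$. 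For $v_1 = tf_3$, since $f_3 \in D_{1,K} = \mathrm{Fil}^{-1}(D_{1,K})$ and $t \in \mathrm{Fil}^1$, one has $v_1 \in \mathrm{Fil}^0$. For $v_2 = g_{\mathscr{L},2} + w f_3$, the first term is in $\mathrm{Fil}^0 D_{2,K}$ and the second in $\mathrm{Fil}^1 \cdot \mathrm{Fil}^{-1} \subset \mathrm{Fil}^0$. For $v_3$, the identity $(u+v_{\mathscr{L}})^2 - \mathscr{L}^2 = 2w\mathscr{L} + w^2$ yields
$$tv_3 = g_{\mathscr{L},1} + 2w\, g_{\mathscr{L},2} + w^2 f_3,$$
whose three summands lie respectively in $\mathrm{Fil}^1 D_K$, $\mathrm{Fil}^1 \cdot \mathrm{Fil}^0$, and $\mathrm{Fil}^2 \cdot \mathrm{Fil}^{-1}$, all inside $\mathrm{Fil}^1(D_K \otimes \mathbf{B}_{\mathrm{dR}})$. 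Dividing by $t$ gives $v_3 \in \mathrm{Fil}^0$.

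Third, a dimension count. By the Colmez--Fontaine equivalence (Theorem \ref{thm:strep}), $\dim_L W_i = \mathrm{rank}_{L\otimes K_0}(D_i) = i$. The $v_j$ are visibly $L$-linearly independent: reading off the $\{f_1,f_2,f_3\}$-coefficients, only $v_3$ contributes to $f_1$ (with leading term $f_1/t$), only $v_2$ and $v_3$ to $f_2$, etc., so a triangular argument forces any vanishing $L$-combination to be trivial. Hence $\{v_1,\ldots,v_i\}$ spans $W_i$ as an $L$-vector space.

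The main obstacle I anticipate is the filtration analysis for $v_3$: the quadratic term $(u+v_{\mathscr{L}})^2$ must be reorganized so that every surviving piece carries enough Hodge weight to offset the prefactor $t^{-1}$. The crucial observation is that $w \in \mathrm{Fil}^1$, so $w\,g_{\mathscr{L},2}$ acquires one extra unit and $w^2 f_3$ acquires two --- exactly enough to compensate for $f_3$ only lying in $\mathrm{Fil}^{-1}$ and to bring $tv_3$ safely into $\mathrm{Fil}^1(D_K\otimes\mathbf{B}_{\mathrm{dR}})$.
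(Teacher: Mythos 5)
Your proof is correct and follows essentially the same route as the paper: verify that each $v_j$ lies in $\mathbf{X}_{\mathrm{st}}(D_j)\cap\mathrm{Fil}^0(D_{j,K}\otimes_K\mathbf{B}_{\mathrm{dR}})=W_j$, then compare dimensions; the only organizational difference is that you deduce the $\varphi=1,N=0$ condition by expressing the $v_j$ in terms of the $\mathbf{B}_{e,L}$-basis $\{u_j\}$ rather than recomputing $\varphi(v_3)=v_3$ and $N(v_3)=0$ directly, and you package the filtration estimate via the auxiliary element $w=u+v_{\mathscr{L}}-\mathscr{L}\in\mathrm{Fil}^1$. Incidentally, your regrouping $tv_3=g_{\mathscr{L},1}+2w\,g_{\mathscr{L},2}+w^2f_3$ is cleaner and in fact more accurate than the paper's displayed decomposition, whose final term $\tfrac{(v_{\mathscr{L}}-\mathscr{L})^2+2u(v_{\mathscr{L}}-\mathscr{L})}{t}f_3$ is missing a $u^2$ (it should be $w^2/t$); this does not affect the paper's conclusion since $u^2\in\mathrm{Fil}^2$ as well, but your version is the exact identity.
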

\begin{proof}
One can check directly that $v_{i}$ is in $\mathbf{V}_{\mathrm{st}}((D_{i}, \mathrm{Fil}_{\mathscr{L},k}))$. Take the last one as example.

The $\varphi$-action:
\begin{eqnarray*}
\varphi(v_{3}) &= &\frac{1}{pt}(pf_{1} + 2(pu+pv_{\mathscr{L}})f_{2} +(pu+pv_{\mathscr{L}})^{2}p^{-1}f_{3}) \\
&=&v_{3} 
\end{eqnarray*}
The $N$-action:
\begin{eqnarray*}
N(v_{3}) &= &\frac{1}{t}(N(f_{1}) + 2N((u+v_{\mathscr{L}}))f_{2} +2(u+v_{\mathscr{L}})N(f_{2}) +N((u+v_{\mathscr{L}})^{2})f_{3}) \\
&=&\frac{1}{t}(2f_{2} - 2f_{2} +2(u+v_{\mathscr{L}})f_{3} -2(u+v_{\mathscr{L}})f_{3}) \\
&=& 0
\end{eqnarray*}
The filtration:

\begin{eqnarray*}
v_{3} &= &\frac{1}{t}(f_{1} + 2\mathscr{L} f_{2} + \mathscr{L}^{2} f_{3}) + \frac{2(u+v_{\mathscr{L}}-\mathscr{L})}{t}(f_{2} + \mathscr{L} f_{3})\\ & &+\frac{(v_{\mathscr{L}}-\mathscr{L})^{2}+2u(v_{\mathscr{L}} - \mathscr{L})}{t}f_{3} \\
&\in&\mathrm{Fil}_{\mathscr{L},\mathbf{1}}^{1}(D_{K})\otimes_{K}\mathrm{Fil}^{-1}\mathbf{B}_{\mathrm{dR}}+ \mathrm{Fil}_{\mathscr{L},\mathbf{1}}^{0}(D_{K})\otimes_{K}\mathrm{Fil}^{0}\mathbf{B}_{\mathrm{dR}} +\mathrm{Fil}_{\mathscr{L},\mathbf{1}}^{-1}(D_{K})\otimes_{K}\mathrm{Fil}^{1}\mathbf{B}_{\mathrm{dR}}\\
&=& \mathrm{Fil}^{0}_{\mathscr{L},\mathbf{1}}(\mathbf{X}_{\mathrm{st}}(D))
\end{eqnarray*}
It is obvious that the elements $v_{i}$ are $L$-linear independent. Then the statement follows from comparing the dimensions.
\end{proof}

We identify $W_{i}/W_{i-1}=L\cdot v_{i}$ with $L(2-i)$ from now on(for $i =1, 2, 3$).
\begin{lem}\label{lem:class}
The cohomology class in $H^{1}(L(1))$ associated to the extension $W_{2}$ of $L$ by $L(1)$ is $(\exp(\mathscr{L}))+(p)$.
\end{lem}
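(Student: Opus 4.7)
The plan is to compute the extension cocycle directly using the explicit basis $\{v_1, v_2\}$ of $W_2$ furnished by Proposition \ref{prop:base}. Since $W_1 = L\cdot v_1$ is identified with $L(1)$ via $v_1 = tf_3$, and the image $\bar v_2$ of $v_2$ in $W_2/W_1 \cong L$ is Galois-fixed, the class of the extension in $H^1(L(1))$ is represented by the $1$-cocycle
$$c : \mathscr{G}_K \longrightarrow L(1), \qquad g \longmapsto g(v_2) - v_2.$$
So everything reduces to computing $g(v_2) - v_2$ for $g \in \mathscr{G}_K$.

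Using $v_2 = f_2 + (u+v_{\mathscr{L}})f_3$ and the fact that $f_2, f_3 \in D$ are Galois-fixed inside $D \otimes_{K_0} \mathbf{B}_{\mathrm{st}}$, I would write
$$g(v_2) - v_2 = \bigl((g(u)-u) + (g(v_{\mathscr{L}}) - v_{\mathscr{L}})\bigr) f_3,$$
so the problem decouples into two Kummer-type computations. For the first, by the defining property of $u = \log p \in \mathbf{B}_{\mathrm{st}}$ and the chosen system $\epsilon$, one has $g(u) - u = (p)(g) \cdot t$, where $(p): \mathscr{G}_K \to \mathbf{Q}_p(1)$ is the Kummer symbol of $p$ recalled in Section 2. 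For the second, the element $v_{\mathscr{L}} \in \mathbf{B}_{\mathrm{cris},L}^{\varphi = p}$ is, by construction, a lift of $\mathscr{L} \in L \otimes_{\mathbf{Q}_p} K \hookrightarrow \mathbf{B}_{\mathrm{dR},L}/\mathrm{Fil}^1$ along the fundamental exact sequence
$$0 \to L(1) \to \mathbf{B}_{\mathrm{cris},L}^{\varphi=p} \to \mathbf{B}_{\mathrm{dR},L}/\mathrm{Fil}^1(\mathbf{B}_{\mathrm{dR},L}) \to 0.$$
The connecting homomorphism of this sequence is exactly the map sending $\exp(b) \in L\otimes K \hookrightarrow H^1(L(1))$ to its Kummer class, so $g(v_{\mathscr{L}}) - v_{\mathscr{L}} = (\exp(\mathscr{L}))(g)\cdot t$ inside $L(1) = Lt \subset \mathbf{B}_{\mathrm{cris},L}$.

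Combining the two computations and dividing by $t$ (which transforms $tf_3$ into $v_1$),
$$c(g) = \bigl((p)(g) + (\exp(\mathscr{L}))(g)\bigr)\,v_1,$$
so the cohomology class of $W_2$ in $H^1(L(1))$ is $(p) + (\exp(\mathscr{L}))$, as claimed. The only non-routine step is pinning down the identification in the second bullet above: one has to verify that the particular normalization of $v_{\mathscr{L}}$ (defined only up to $Lt$) and the normalization of the exponential embedding $L\otimes K \hookrightarrow H^1(L(1))$ match on the nose, i.e.\ the connecting map of the fundamental exact sequence recovers the Kummer class of $\exp(\mathscr{L})$ with the sign convention fixed in Section 2. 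This ambiguity in $v_{\mathscr{L}}$ precisely reflects the $\psi_1 L$-component indeterminacy of $(p)$ in the decomposition $H^1(L) \cong \mathrm{Hom}_L(L\otimes K, H^2(L(1))) \oplus \psi_1 L$ and so is harmless for the class modulo coboundaries.
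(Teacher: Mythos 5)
Your proposal follows the paper's route exactly: identify $W_1 = L\cdot v_1$ with $L(1)$, lift a generator of $W_2/W_1\cong L$ to $v_2 = f_2 + (u+v_{\mathscr{L}})f_3$, and read off the cocycle $g\mapsto (g-1)v_2$, which decouples into $(g-1)u$ and $(g-1)v_{\mathscr{L}}$. The first piece is indeed $(p)(g)\cdot t$. But the second piece is precisely where you stop short: you \emph{assert} that $g(v_{\mathscr{L}}) - v_{\mathscr{L}} = (\exp(\mathscr{L}))(g)\cdot t$ by invoking that the connecting map of the fundamental exact sequence ``is exactly'' the Kummer map on the exponential image, flagging this yourself as the only non-routine step and then not carrying it out. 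That unverified identification \emph{is} the content of the lemma. The paper closes the gap by exhibiting an explicit lift: writing $\mathscr{L}=\sum_i a_i\otimes b_i$ with $b_i\in p^2\mathscr{O}_K$, it takes $v_{\mathscr{L}}=\sum_i a_i\otimes\log\bigl[(\sqrt[p^m]{\exp(b_i)})_m\bigr]$, checks by hand that this is $\varphi$-invariant with $v_{\mathscr{L}}-\mathscr{L}\in\mathrm{Fil}^1\mathbf{B}_{\mathrm{dR},L}$ (via $\theta$), and then computes $(g-1)v_{\mathscr{L}}$ directly as the Kummer cocycle of $\exp(\mathscr{L})$, exactly parallel to $(g-1)u$ for $u=\log[(\sqrt[p^m]{p})_m]$. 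Without this (or an explicit citation that the connecting map of the fundamental exact sequence realizes the Bloch--Kato exponential, hence the Kummer map, with the chosen normalizations), your argument is incomplete. Separately, your closing remark is muddled: the $Lt$-ambiguity in $v_{\mathscr{L}}$ shifts $v_2$ by a multiple of $v_1$ and hence the cocycle by a coboundary, which is the correct reason it is harmless; but likening this to ``the $\psi_1 L$-component indeterminacy of $(p)$'' confuses $H^1(L(1))$, where $(p)$ lives, with the decomposition of $H^1(L)$, which is a different group.
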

\begin{proof} Suppose that $\mathscr{L} = \sum_{i}a_{i}\otimes b_{i}$ such that $b_{i}$ is in $p^{2}\mathscr{O}_{K}$. Consider  the elements $\log [(\sqrt[p^{m}]{\exp(b_{i})})_{m}]$ in $\mathbf{B}_{\mathrm{cris}}^{+}$. They are stable under the $\varphi$-action. We have 
$$\theta(\log [(\sqrt[p^{m}]{\exp(b_{i})})_{m}] - b_{i}) =
\log(\exp(b_{i})) - b_{i} =0.$$ So the element $\log [(\sqrt[p^{m}]{\exp(b_{i})})_{m}] - b_{i}$ is in $\mathrm{Fil}^{1}\mathbf{B}_{\mathrm{dR}}$. Then we can choose 
$$\sum_{i}a_{i}\otimes \log [(\sqrt[p^{m}]{\exp(b_{i})})_{m}] $$
 as $v_{\mathscr{L}}$. For any $g \in \mathscr{G}_{K}$,
\begin{eqnarray*}
(g-1)v_{2} &=& (g-1)( f_{2}+(u+v_{\mathscr{L}})f_{3}) \\
&=&((g-1)u+(g -1)v_{\mathscr{L}})f_{3} \\
&=&(1\otimes\log [(\frac{g(\sqrt[p^{m}]{p})}{\sqrt[p^{m}]{p}})_{m}]+\sum_{i}a_{i}\otimes\log [(\frac{g(\sqrt[p^{m}]{\exp(b_{i})})}{\sqrt[p^{m}]{\exp(b_{i})}})_{m}])f_{3} \\
&=&(1\otimes\log[\epsilon_{m}^{(p)(g)}] + \sum_{i}a_{i}\otimes\log[\epsilon_{m}^{(\exp(b_{i}))(g)}])f_{3} \\
&=&((\exp(\mathscr{L}))+(p))(g)tf_{3} \\
&=& ((\exp(\mathscr{L}))+(p))(g)v_{1}
\end{eqnarray*}
So the cohomology class in $\Hk{1}{L(1)}$ associated to the extension $W_{2}$ is  $(\exp(\mathscr{L}))+(p)$.
\end{proof}

\begin{prop}\label{prop:triancase}
Let $c$ be a cohomolgy class in $\Hk{1}{W_{3}/W_{1}}$. Suppose $c$ vanishes in $\Hk{1}{W_{3}/W_{2}}$ and lies in the image of $\Hk{1}{W_{3}}$. Then it must come from $\Hk{1}{W_{2}/W_{1}}$. Moreover through the identification of $\Hk{1}{W_{2}/W_{1}}$ and $\Hk{1}{L}$ as above, it must be of the form $\frac{1}{n}\mathrm{tr}(\gamma\mathscr{L})\psi_{1}+ \gamma\psi_{2}$ where $\gamma$ is in $L\otimes_{\mathbf{Q}_{p}} K$.
\end{prop}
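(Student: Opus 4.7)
The plan is to translate both hypotheses on $c$ into conditions living inside $H^1(L)$ and $H^2(L(1))$, and then invoke the cup product calculation of Proposition \ref{prop:calcupprod}. First I would exploit the short exact sequence $0 \to W_2/W_1 \to W_3/W_1 \to W_3/W_2 \to 0$. Since $W_3/W_2 \cong L(-1)$ and a $p$-adic local field has $H^0(L(-1)) = 0$, the associated long exact sequence yields an injection $H^1(W_2/W_1) \hookrightarrow H^1(W_3/W_1)$ whose image is exactly the kernel of the projection to $H^1(W_3/W_2)$. Thus the vanishing of $c$ in $H^1(W_3/W_2)$ produces a unique lift $\tilde c \in H^1(W_2/W_1) = H^1(L)$, which I would write as $\tilde c = a_1\psi_1 + a_2\psi_2$ with $a_1 \in L$ and $a_2 \in L\otimes_{\mathbf{Q}_p}K$.

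Next I would encode the condition that $c$ lies in the image of $H^1(W_3)$ as the vanishing of the boundary $\delta_3(c) \in H^2(W_1) = H^2(L(1))$ coming from $0 \to W_1 \to W_3 \to W_3/W_1 \to 0$. Naturality of boundary homomorphisms applied to the obvious map of short exact sequences with top row $0 \to W_1 \to W_2 \to W_2/W_1 \to 0$ gives $\delta_3 \circ \iota_{*} = \delta_1$, where $\iota\colon W_2/W_1 \hookrightarrow W_3/W_1$ is the inclusion and $\delta_1$ is cup product with the extension class $[W_2] \in H^1(L(1))$. Therefore the second hypothesis is equivalent to $\tilde c \cup [W_2] = 0$ in $H^2(L(1))$.

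Finally I would combine Lemma \ref{lem:class}, which identifies $[W_2] = (\exp(\mathscr{L})) + (p)$, with Proposition \ref{prop:calcupprod}, which gives
\begin{equation*}
\tilde c \cup [W_2] \;=\; \left(a_1 - \tfrac{1}{n}\mathrm{tr}(a_2\mathscr{L})\right)\bigl(\psi_1 \cup (p)\bigr).
\end{equation*}
Since $\psi_1\cup(p)$ generates the one-dimensional space $H^2(L(1))$, vanishing forces $a_1 = \tfrac{1}{n}\mathrm{tr}(a_2\mathscr{L})$; setting $\gamma = a_2$ produces the stated form. I expect the main conceptual step to be the naturality argument identifying $\delta_3\circ\iota_{*}$ with $\delta_1$, which converts the abstract lifting obstruction into an explicit cup product; once that is in place, everything reduces to bookkeeping using results already recorded in Section 2.
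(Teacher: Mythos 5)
Your proof is correct and follows essentially the same route as the paper: lift $c$ to $H^1(W_2/W_1)=H^1(L)$ via the first short exact sequence, interpret the lifting obstruction from $0\to W_1\to W_3\to W_3/W_1\to 0$ as a cup product with the extension class of $W_2$, and then invoke Lemma~\ref{lem:class} and Proposition~\ref{prop:calcupprod}. The only difference is cosmetic: you spell out the naturality argument identifying $\delta_3\circ\iota_*$ with cup product against $[W_2]$, a step the paper's proof leaves implicit when it passes from $\delta(\gamma_1\psi_1+\gamma_2\psi_2)=0$ directly to $(\gamma_1\psi_1+\gamma_2\psi_2)\cup\bigl((\exp(\mathscr{L}))+(p)\bigr)=0$.
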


\begin{proof}
Consider the following exact sequence:
$$0 \to W_{2}/W_{1} \to W_{3}/W_{1} \to W_{3}/W_{2} \to 0.$$
It induces the exact sequence
$$0 \to \Hk{1}{W_{2}/W_{1}} \to \Hk{1}{W_{3}/W_{1}} {\to} \Hk{1}{W_{3}/W_{2}}.$$
Since $c$ vanishes in $\Hk{1}{W_{3}/W_{2}}$ so it must come from $\Hk{1}{W_{2}/W_{1}}$. We have identified $W_{2}/W_{1}$ with $L$ so it can be viewed as a cohomology class $\gamma_{1}\psi_{1} + \gamma_{2}\psi_{2}$ in the subgroup $\Hk{1}{L}$ of $\Hk{1}{W_{3}/W_{1}}$.

Then consider the diagram of exact sequences:
$$0 \to W_{1} \to W_{3} \to W_{3}/W_{1} \to 0.$$
It induces the exact sequence
$$\Hk{1}{W_{3}} \to \Hk{1}{W_{3}/W_{1}} \stackrel{\delta}{\to} \Hk{2}{W_{1}}.$$
Recall that $c$ is in the image of $H^{1}(W_{3})$, so $\delta(\gamma_{1}\psi_{1} + \gamma_{2}\psi_{2})$ is zero.  Then $(\gamma_{1}\psi_{1} + \gamma_{2}\psi_{2})\cup((\exp(\mathscr{L}))+(p))$ is also zero. By Proposition \ref{prop:calcupprod}, we have that $\gamma_{1} =\frac{1}{n}\mathrm{tr} (\gamma_{2}\mathscr{L})$.
\end{proof}

\subsection{A cohomological property} ~\vskip 1pt

In this subsection we establish a cohomological property similar to \ref{prop:triancase} on the level  of $\mathbf{X}_{\mathrm{st}}$.

\begin{prop}\label{prop:cohom}
Let $c$ be a cohomolgy class in $\Hk{1}{\mathbf{X}_{\mathrm{st}}(D)/\mathbf{X}_{\mathrm{st}}(D_{1})}$. Suppose $c$ vanishes in $\Hk{1}{\mathbf{X}_{\mathrm{st}}(D)/\mathbf{X}_{\mathrm{st}}(D_{2})}$ and lies in the image of $\Hk{1}{W_{\mathscr{L},k}}$. Then it must come from $\Hk{1}{W_{2}/W_{1}}$ (identified with $\Hk{1}{L}$) and be of the form $\frac{1}{n}\mathrm{tr} (\gamma\mathscr{L})\psi_{1} + \gamma\psi_{2}$ where $\gamma$ is in $L\otimes_{\mathbf{Q}_{p}} K$.
\end{prop}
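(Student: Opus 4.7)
The plan is to reduce to Proposition \ref{prop:triancase}, which already supplies the analogous conclusion at the level of the Galois representation $W_{3}=W_{\mathscr{L},\mathbf{1}}$. Two ingredients are needed: shifting the Hodge--Tate weights of $\mathrm{Fil}_{\mathscr{L},k}$ to the convenient choice $k=\mathbf{1}$ by means of Lemma \ref{lem:indepofwt}, and comparing $\mathbf{X}_{\mathrm{st}}$-cohomology of the rank-one subquotients of $D$ with the Galois cohomology of $W_{i}/W_{i-1}$ by means of Lemma \ref{lem:indepofwt1}.

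First I would reduce to the case $k=\mathbf{1}$. The filtrations $\mathrm{Fil}_{\mathscr{L},\mathbf{1}}$ and $\mathrm{Fil}_{\mathscr{L},k}$ on $D$ share the same filtration type, and since each $k_{\sigma}\geq 1$ the ordering of their Hodge--Tate weights fulfils the hypothesis of Lemma \ref{lem:indepofwt}. Hence the image of $\Hk{1}{W_{\mathscr{L},k}}$ in $\Hk{1}{\mathbf{X}_{\mathrm{st}}(D)}$ coincides with the image of $\Hk{1}{W_{3}}$, and pushing forward under $\mathbf{X}_{\mathrm{st}}(D)\to \mathbf{X}_{\mathrm{st}}(D)/\mathbf{X}_{\mathrm{st}}(D_{1})$, the same identity holds for their images in $\Hk{1}{\mathbf{X}_{\mathrm{st}}(D)/\mathbf{X}_{\mathrm{st}}(D_{1})}$. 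So I may assume $k=\mathbf{1}$ throughout.

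Next I would observe that each of $D_{2}/D_{1}$ and $D/D_{2}$ inherits from $\mathrm{Fil}_{\mathscr{L},\mathbf{1}}$ a filtration whose $\mathrm{Fil}^{0}$ is the full module---since $W_{2}/W_{1}\cong L$ has Hodge--Tate weight $0$ and $W_{3}/W_{2}\cong L(-1)$ has weight $1$. Lemma \ref{lem:indepofwt1} then produces isomorphisms $\Hk{k}{W_{2}/W_{1}}\xrightarrow{\sim}\Hk{k}{\mathbf{X}_{\mathrm{st}}(D_{2}/D_{1})}$ and $\Hk{k}{W_{3}/W_{2}}\xrightarrow{\sim}\Hk{k}{\mathbf{X}_{\mathrm{st}}(D/D_{2})}$ for every $k$. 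Combining the long exact sequence of
\[ 0\to \mathbf{X}_{\mathrm{st}}(D_{2}/D_{1})\to \mathbf{X}_{\mathrm{st}}(D/D_{1})\to \mathbf{X}_{\mathrm{st}}(D/D_{2})\to 0 \]
with its $\mathbf{V}_{\mathrm{st}}$-counterpart $0\to W_{2}/W_{1}\to W_{3}/W_{1}\to W_{3}/W_{2}\to 0$ (exact by Theorem \ref{thm:strep}) and with the vertical inclusions $W_{\ast}\hookrightarrow \mathbf{X}_{\mathrm{st}}(D_{\ast})$, I would then chase as follows: the hypothesis that $c$ lifts to $\Hk{1}{W_{3}}$ supplies a lift $\tilde c\in \Hk{1}{W_{3}/W_{1}}$ of $c$, because $W_{3}\to \mathbf{X}_{\mathrm{st}}(D)/\mathbf{X}_{\mathrm{st}}(D_{1})$ kills $W_{1}$; and the vanishing of $c$ in $\Hk{1}{\mathbf{X}_{\mathrm{st}}(D/D_{2})}$ combined with the second isomorphism forces $\tilde c$ to vanish in $\Hk{1}{W_{3}/W_{2}}$.

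This places $\tilde c$ squarely within the hypotheses of Proposition \ref{prop:triancase}, so $\tilde c$ comes from $\Hk{1}{W_{2}/W_{1}}=\Hk{1}{L}$ and has the advertised shape $\tfrac{1}{n}\mathrm{tr}(\gamma\mathscr{L})\psi_{1}+\gamma\psi_{2}$; transporting back to $c$ via the first isomorphism finishes the argument. The main difficulty is not conceptual but bookkeeping: the proof juggles several compatible identifications---$W_{2}/W_{1}\cong L$, the $\mathbf{V}_{\mathrm{st}}$-versus-$\mathbf{X}_{\mathrm{st}}$ comparisons on rank-one subquotients, and the decomposition $\Hk{1}{L}=L\cdot\psi_{1}\oplus(L\otimes_{\mathbf{Q}_{p}}K)\cdot\psi_{2}$---and one must keep them consistent across the combined diagram so that the class produced by Proposition \ref{prop:triancase} genuinely represents $c$.
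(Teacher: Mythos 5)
Your proof is correct and follows essentially the same route as the paper's: reduce to $k=\mathbf{1}$ via Lemma~\ref{lem:indepofwt}, use the compatibility of $W_3\hookrightarrow\mathbf{X}_{\mathrm{st}}(D)$ with the quotient maps to lift $c$ to a class $\tilde c\in\Hk{1}{W_3/W_1}$ coming from $\Hk{1}{W_3}$, then apply Lemma~\ref{lem:indepofwt1} on the rank-one graded pieces $D_2/D_1$ and $D/D_2$ (whose induced filtrations satisfy $\mathrm{Fil}^0=$ all) to transfer the vanishing of $c$ in $\Hk{1}{\mathbf{X}_{\mathrm{st}}(D)/\mathbf{X}_{\mathrm{st}}(D_2)}$ to vanishing of $\tilde c$ in $\Hk{1}{W_3/W_2}$, and conclude by Proposition~\ref{prop:triancase}. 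The only cosmetic difference is that you spell out why Lemma~\ref{lem:indepofwt1} applies to the subquotients, which the paper leaves implicit.
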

\begin{proof}
 By the lemma \ref{lem:indepofwt}, the image of $\Hk{1}{W_{\mathscr{L},k}}$ in $\Hk{1}{\mathbf{X}_{\mathrm{st}}(D)/\mathbf{X}_{\mathrm{st}}(D_{1})}$ is independent of the weight vector $k$. So $c$ lies in the image of $\Hk{1}{W_{\mathscr{L},\mathbf{1}}}$. Consider the commutative diagram
$$\xymatrix
{  W_{3} \ar[r] \ar[d] & W_{3}/W_{1}  \ar[d]\\
 \mathbf{X}_{\mathrm{st}}(D) \ar[r] & \mathbf{X}_{\mathrm{st}}(D)/\mathbf{X}_{\mathrm{st}}(D_{1}). }$$
It induces 
$$\xymatrix
{ \Hk{1}{W_{3}} \ar[r] \ar[d] & \Hk{1}{W_{3}/W_{1}}  \ar[d]\\
 \Hk{1}{\mathbf{X}_{\mathrm{st}}(D)} \ar[r] & \Hk{1}{\mathbf{X}_{\mathrm{st}}(D)/\mathbf{X}_{\mathrm{st}}(D_{1})}. }$$
So $c$ also lies in the image of $\Hk{1}{W_{3}/W_{1}}$.

Then consider the commutative diagram of short exact sequences 
$$\xymatrix
{ 0 \ar[r] & W_{2}/W_{1} \ar[r] \ar[d] & W_{3}/W_{1} \ar[r] \ar[d] & W_{3}/W_{2} \ar[d] \ar[r] &0\\
 0 \ar[r] & \mathbf{X}_{\mathrm{st}}(D_{2})/\mathbf{X}_{\mathrm{st}}(D_{1}) \ar[r] & \mathbf{X}_{\mathrm{st}}(D)/\mathbf{X}_{\mathrm{st}}(D_{1}) \ar[r] & \mathbf{X}_{\mathrm{st}}(D)/\mathbf{X}_{\mathrm{st}}(D_{2}) \ar[r] & 0. }$$
It induces the following commutative diagram exact sequences
$$\xymatrix
{ 0\ar[r] \ar[d] & \Hk{1}{W_{2}/W_{1}} \ar[r] \ar[d] & \Hk{1}{W_{3}/W_{1}} \ar[r] \ar[d] & \Hk{1}{W_{3}/W_{2}} \ar[d] \\
 \Hk{0}{\mathbf{X}_{\mathrm{st}}(D/D_{2})}\ar[r] & \Hk{1}{\mathbf{X}_{\mathrm{st}}(D_{2}/D_{1})} \ar[r] & \Hk{1}{\mathbf{X}_{\mathrm{st}}(D)/\mathbf{X}_{\mathrm{st}}(D_{1})} \ar[r] & \Hk{1}{\mathbf{X}_{\mathrm{st}}(D)/\mathbf{X}_{\mathrm{st}}(D_{2})}. }$$
By Lemma \ref{lem:indepofwt1}, the first, second and fourth columns are all isomorphisms. Then proposition follows from Propostion \ref{prop:triancase}.   
\end{proof}

\section{Extension classes of trivial $(\varphi,N)$-modules}
In this section we consider some extension classes of trivial $(\varphi,N)$-modules. To facilitate the computation, we introduce some notations first. Fixed an embedding $\iota_{0}$ form $K_{0}$ to $L$. Let $\iota_{i}$($ 0 \leq i <f$) be the embedding $\iota_{0}\circ\varphi^{-i}$. Identify $L\otimes_{\mathbf{Q}_{p}}K_{0}$ with $\prod_{i=0}^{f-1}L$ using the map which sends $a\otimes b$ to $(a\cdot\iota_{i}(b))_{i}$. 

Let $\alpha$ be an element in $L$. In this section we consider the $(\varphi,N)$-modules $D'$ of rank $2$ such that under the basis $\{v'_{1}, v'_{2}\}$ the $\varphi$ action is of the form ${\scriptsize{\left ( \!\! \begin{array}{cc} 1 & (\alpha,0,\dots,0) \\ 0  & 1 \end{array}\!\! \right ) }}$ and the $N$-action is $0$. Identifying $L\cdot v_{i}$ with $L$. Then we get an extension class of two trivial $(\varphi,N)$-modules. 

Let $\omega_{0}$ be an element in $\mathbf{Q}_{p}^{\mathrm{ur}} \subset \mathbf{B}_{\mathrm{cris}}$ such that $\varphi^{f}(\omega_{0})-\omega_{0}=1$. Let $\omega_{i}$ be the element $\varphi^{i}(\omega_{0})$ for any $0 \leq i <f$. 
\begin{prop}
The $\mathbf{B}_{e,L}$-module $\mathbf{X}_{\mathrm{st}}(D')$ is free of rank two and spanned by $v'_{1}$ and $v'_{2}+(-\alpha\omega_{i})_{0 \leq i <f}v'_{1}$.
\end{prop}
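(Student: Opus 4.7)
The plan is to verify that the two displayed elements lie in $\mathbf{X}_{\mathrm{st}}(D')$ and generate it freely over $\mathbf{B}_{e,L}$. Since $N$ vanishes on $D'$, every element of $D' \otimes_{K_0} \mathbf{B}_{\mathrm{st}}$ is automatically killed by $N$, so only the $\varphi$-invariance needs checking. The element $v'_1$ is plainly $\varphi$-fixed. For the second element $w := v'_2 + (-\alpha\omega_i)_{0 \leq i < f}\, v'_1$, writing $c := (-\alpha\omega_i)_{0 \leq i < f}$ and using $\varphi(v'_2) = v'_2 + (\alpha, 0, \ldots, 0)\, v'_1$, the identity $\varphi(w) = w$ is equivalent to
\[
(1 - \varphi)(c) = (\alpha, 0, \ldots, 0).
\]

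First I would spell out $c$ inside $L \otimes_{\mathbf{Q}_p} \mathbf{B}_{\mathrm{st}}$ as $\sum_i (-\alpha) e_i \omega_i$, where $e_i \in L \otimes_{\mathbf{Q}_p} K_0$ are the orthogonal idempotents cut out by the product decomposition $L \otimes_{\mathbf{Q}_p} K_0 = \prod_i L$ induced by the embeddings $\iota_i$. The relation $\iota_i \circ \varphi = \iota_{i-1}$ immediately gives $\varphi(e_j) = e_{j+1}$ with indices mod $f$, while the defining property of the $\omega_i$ gives $\varphi(\omega_i) = \omega_{i+1}$ for $i < f-1$ and $\varphi(\omega_{f-1}) = \omega_0 + 1$. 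Inserting these, $\varphi(c) = \sum_i (-\alpha) e_{i+1} \varphi(\omega_i)$ collapses to $c - \alpha e_0$, yielding the required identity.

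For spanning and freeness, I would use the short exact sequence of $(\varphi, N)$-modules
\[
0 \to (L \otimes_{\mathbf{Q}_p} K_0) v'_1 \to D' \to D'/(L \otimes_{\mathbf{Q}_p} K_0) v'_1 \to 0,
\]
whose outer terms are both isomorphic to the trivial rank-one $(\varphi, N)$-module. Applying the exact functor $\mathbf{X}_{\mathrm{st}}$ (Proposition 5.1 of \cite{CF}) and using $\mathbf{X}_{\mathrm{st}}$ of the trivial rank-one module equals $\mathbf{B}_{e,L}$ gives a short exact sequence of $\mathbf{B}_{e,L}$-modules with both outer terms $\mathbf{B}_{e,L}$. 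The element $v'_1$ is a free generator of the kernel, and $w$ lifts a free generator of the quotient; the assignment $1 \mapsto w$ therefore provides a splitting, exhibiting $\{v'_1, w\}$ as a free $\mathbf{B}_{e,L}$-basis of rank two.

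The main delicacy is the computation of $\varphi(c)$: a single Frobenius packages two actions here, the cyclic shift on the idempotents $e_j$ of $L \otimes_{\mathbf{Q}_p} K_0$ and the action on $\omega_i \in \mathbf{Q}_p^{\mathrm{ur}} \subset \mathbf{B}_{\mathrm{cris}}$. The term $(\alpha, 0, \ldots, 0)$ on the right-hand side arises precisely from the anomaly $\varphi(\omega_{f-1}) = \omega_0 + 1$, which breaks the otherwise cyclic symmetry at the $e_0$-component and is ultimately responsible for detecting the extension class.
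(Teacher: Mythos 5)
Your proposal is correct and takes essentially the same approach as the paper: the heart of both arguments is the computation that the second element is $\varphi$-fixed, driven by the identity $\varphi^f(\omega_0)=\omega_0+1$, and your idempotent bookkeeping of $\varphi(c)$ matches the paper's componentwise calculation exactly. The only (minor) divergence is in the spanning/freeness step: the paper observes that $\{v'_1,\,v'_2+c\,v'_1\}$ is a $\mathbf{B}_{\mathrm{st},L}$-basis of $D'\otimes_{K_0}\mathbf{B}_{\mathrm{st}}$ consisting of elements of $\mathbf{X}_{\mathrm{st}}(D')$ and concludes it is therefore a $\mathbf{B}_{e,L}$-basis, whereas you apply the exactness of $\mathbf{X}_{\mathrm{st}}$ to the filtration $\mathbf{B}_{e,L}v'_1\subset\mathbf{X}_{\mathrm{st}}(D')$ and split the resulting short exact sequence via $w$; both finishes rest on the same input (Proposition 5.1 of \cite{CF}) and are equally valid.
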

\begin{proof}
The $\mathbf{B}_{\mathrm{st},L}$-module $D'\otimes_{K_{0}}\mathbf{B}_{\mathrm{st},L}$ is free of rank two and spanned by $v'_{1}$ and $v'_{2}+(-\alpha\omega_{i})_{0 \leq i <f}v'_{1}$. On the other hand,
\begin{align*}
\varphi(v'_{2}+(-\alpha\omega_{i})_{0 \leq i <f}v'_{1})&=v'_{2}+(\alpha,0,\cdots,0)v'_{1}+(-\alpha\varphi^{f}(\omega_{0}),-\alpha\omega_{1},\cdots,-\alpha\omega_{f-1})v'_{1}\\
&=v'_{2}+(-\alpha\omega_{i})_{0 \leq i <f}v'_{1}
\end{align*}
So these two elements are all in $\mathbf{X}_{\mathrm{st}}(D')$. Hence the proposition holds.
\end{proof}
Identify $\mathbf{B}_{e,L}\cdot v'_{i}$($i=1,2$) with $\mathbf{B}_{e,L}$. Then $\mathbf{X}_{\mathrm{st}}(D')$ is  an extension of $\mathbf{B}_{e,L}$ by $\mathbf{B}_{e,L}$.

Consider the filtration $\mathrm{Fil}$ on $D'$ defined by
$$\mathrm{Fil}^{i}(D'_{K}) := \left\{ \begin{array}{ll}
0 & i > 0 \\
D'_{K} & i \leq 0
\end{array}
\right.$$
Under the filtration $D'$ is admissible. The filtrated $(\varphi,N)$-submodule $(L\otimes_{\mathbf{Q}_{p}}K_{0})\cdot v'_{1}$ is a trivial filtered $(\varphi,N)$-module. The quotient $(L\otimes_{\mathbf{Q}_{p}}K_{0})\cdot v'_{2}$ is also trivial. 

Let $V$ be the Galois representation $\mathbf{V}_{\mathrm{st}}((D',\mathrm{Fil}))$.
\begin{prop}
The Galois representation $V$ is spanned by $v'_{1}$ and $v'_{2}+(-\alpha\omega_{i})_{0 \leq i <f}v'_{1}$.
\end{prop}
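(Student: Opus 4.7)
\medskip

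\noindent\textbf{Proof plan.} By definition $V = \mathbf{V}_{\mathrm{st}}((D',\mathrm{Fil}))$ is the kernel of the map $\mathbf{X}_{\mathrm{st}}(D') \to \mathbf{X}_{\mathrm{dR}}((D',\mathrm{Fil}))$. Because $\mathrm{Fil}^{0}(D'_{K}) = D'_{K}$ and $\mathrm{Fil}^{1}(D'_{K}) = 0$, the induced filtration on $D'_{K}\otimes_{K}\mathbf{B}_{\mathrm{dR}}$ satisfies
\[
\mathrm{Fil}^{0}(D'_{K}\otimes_{K}\mathbf{B}_{\mathrm{dR}}) = D'_{K}\otimes_{K}\mathbf{B}_{\mathrm{dR}}^{+}.
\]
Hence $V$ is precisely the intersection of $\mathbf{X}_{\mathrm{st}}(D')$ with $D'_{K}\otimes_{K}\mathbf{B}_{\mathrm{dR}}^{+}$ inside $D'\otimes_{K_{0}}\mathbf{B}_{\mathrm{st}}$.

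First I would check that the two claimed elements actually lie in $V$. The previous proposition already shows that $v'_{1}$ and $w := v'_{2}+(-\alpha\omega_{i})_{0\leq i<f}\,v'_{1}$ belong to $\mathbf{X}_{\mathrm{st}}(D')$, so the only remaining point is that each is contained in $D'_{K}\otimes_{K}\mathbf{B}_{\mathrm{dR}}^{+}$. For $v'_{1}$ this is obvious. For $w$ it suffices to observe that each $\omega_{i}$ lies in $\mathbf{Q}_{p}^{\mathrm{ur}}\subset\mathbf{B}_{\mathrm{cris}}\subset\mathbf{B}_{\mathrm{dR}}^{+}$, so the ``correction term'' $(-\alpha\omega_{i})_{i}v'_{1}$ is in $D'_{K}\otimes_{K}\mathbf{B}_{\mathrm{dR}}^{+}$ as well.

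Next I would pin down $\dim_{L}V$. Since the filtered $(\varphi,N)$-module $(D',\mathrm{Fil})$ is admissible (this is stated just before the proposition), Theorem \ref{thm:strep} (the Colmez--Fontaine equivalence) gives $\mathbf{D}_{\mathrm{st}}(V) = (D',\mathrm{Fil})$; in particular $V$ is a semi-stable $L$-representation with
\[
\dim_{L}V = \mathrm{rank}_{L\otimes_{\mathbf{Q}_{p}}K_{0}}D' = 2.
\]
Finally, $v'_{1}$ and $w$ are $L$-linearly independent: any relation $a v'_{1}+b w=0$ with $a,b\in L$ unfolds to $(a-b(-\alpha\omega_{i})_{i})v'_{1}+b v'_{2}=0$ in the free $\mathbf{B}_{\mathrm{st},L}$-module $D'\otimes_{K_{0}}\mathbf{B}_{\mathrm{st}}$, forcing $b=0$ and then $a=0$. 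Combining the inclusion $L v'_{1}+L w\subseteq V$ with the dimension count yields equality, which is the claim.

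There is not really a substantive obstacle; the only thing to be careful about is distinguishing the $L$-span from the $\mathbf{B}_{e,L}$-span (since $\mathbf{X}_{\mathrm{st}}(D')$ and $V$ have the same pair of generators but with scalars in $\mathbf{B}_{e,L}$ vs.\ $L$), and this is taken care of by the dimension computation.
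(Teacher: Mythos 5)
Your proof is correct and follows the same route as the paper's one-line argument: both check that $v'_1$ and $w$ lie in $\mathrm{Fil}^{0}(D'_{K}\otimes_K\mathbf{B}_{\mathrm{dR}})$ (hence in $V$), note they are $L$-linearly independent, and conclude by the dimension count $\dim_L V = 2$ coming from admissibility. You have merely spelled out the details the paper leaves implicit (in particular the observation that $\omega_i\in\mathbf{Q}_p^{\mathrm{ur}}\subset\mathbf{B}_{\mathrm{dR}}^{+}$).
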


\begin{proof}
These two elements are $L$-linearly independent and contained in $\mathrm{Fil}^{0}(D'_{\alpha,K}\otimes_{K}\mathbf{B}_{\mathrm{dR}})$. So by the above proposition, $V$ is spanned by them.  
\end{proof}
Identify $L\cdot v'_{i}$($i=1,2)$ with $L$. Then $V$ is an extension of $L$ by $L$. We compute its associated cohomology class in the following proposition.

\begin{prop}
The associated cohomology class of $V$ in $\Hk{1}{L}$ is $-\frac{\alpha}{f} \psi_{1}$.
\end{prop}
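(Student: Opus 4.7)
The plan is to read off the extension class directly from the Galois cocycle associated to the explicit basis just produced. Since $V$ sits in a short exact sequence $0 \to L \cdot v'_1 \to V \to L \cdot v'_2 \to 0$ with the lift $x := v'_2 + (-\alpha\omega_i)_{0 \leq i < f}\, v'_1$ of $v'_2$, its class in $\Hk{1}{L}$ is represented by $g \mapsto g(x) - x$, viewed in $L \cdot v'_1 \cong L$. Because $v'_1, v'_2$ are Galois-fixed inside $D' \otimes_{K_0} \mathbf{B}_{\mathrm{st}}$, I would immediately reduce to $g(x)-x = (-\alpha (g-1)\omega_i)_{i}\, v'_1$, so the whole computation comes down to evaluating $(g-1)\omega_i$ for each $i$.

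The core step is to show $(g-1)\omega_0 = \psi_1(g)/f$. I would exploit that $\omega_0 \in \mathbf{Q}_p^{\mathrm{ur}}$, so the Galois action on $\omega_0$ factors through the abelian quotient $\mathrm{Gal}(\mathbf{Q}_p^{\mathrm{ur}}/\mathbf{Q}_p) \cong \widehat{\mathbf{Z}}$, topologically generated by the absolute Frobenius $\varphi$. A Frobenius lift $\mathrm{Frob}_K \in \mathscr{G}_K$ maps to $\varphi^f$ (since $K$ has residue degree $f$), so the defining equation $\varphi^f(\omega_0)-\omega_0=1$ yields $(\mathrm{Frob}_K - 1)\omega_0 = 1 = \psi_1(\mathrm{Frob}_K)/f$; on the inertia $I_K$ the action is trivial and $\psi_1$ vanishes, so both sides agree there as well. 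Since both maps $g \mapsto (g-1)\omega_0$ and $g \mapsto \psi_1(g)/f$ are continuous and factor through $\widehat{\mathbf{Z}}$, agreement on the dense subset $\mathbf{Z}$ forces the identity on all of $\mathscr{G}_K$.

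Extending to $\omega_i = \varphi^i(\omega_0)$ is immediate from commutativity of the Galois action with $\varphi$ on $\mathbf{Q}_p^{\mathrm{ur}}$ (the relevant Galois group being abelian): $(g-1)\omega_i = \varphi^i((g-1)\omega_0) = \psi_1(g)/f$, where the last equality uses that $\psi_1(g)/f \in \mathbf{Q}_p$ is $\varphi$-invariant. Substituting back, $g(x)-x = -\alpha\psi_1(g)/f \cdot (1,\dots,1)\, v'_1 = -\alpha\psi_1(g)/f \cdot v'_1$ under the identification $L \otimes_{\mathbf{Q}_p} K_0 = \prod_i L$, yielding the class $-\tfrac{\alpha}{f}\psi_1$. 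The genuine content lies in pinning down $(g-1)\omega_0 = \psi_1(g)/f$ via the structure of $\widehat{\mathbf{Z}}$ as a continuous quotient of $\mathscr{G}_K$; everything else is routine unwinding of the tensor decomposition.
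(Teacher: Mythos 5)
Your proof is correct and follows essentially the same route as the paper: compute the cocycle $g\mapsto g(x)-x$ for the lift $x=v'_{2}+(-\alpha\omega_{i})_{i}v'_{1}$, using that the $\mathscr{G}_{K}$-action on $\mathbf{Q}_{p}^{\mathrm{ur}}$ factors through the unramified quotient and that the Frobenius of $K$ acts as $\varphi^{f}$, so that $(g-1)\omega_{i}=\psi_{1}(g)/f$. You simply spell out in more detail the identity $(g-1)\omega_{0}=\psi_{1}(g)/f$ via the defining relation $\varphi^{f}(\omega_{0})-\omega_{0}=1$ and continuity on $\widehat{\mathbf{Z}}$, which the paper states more tersely as $(\varphi^{f})^{\psi_{1}(g)/f}(\omega_{i})-\omega_{i}=\psi_{1}(g)/f$.
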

\begin{proof}
For any $g$ in $\mathscr{G}_{K}$
\begin{align*}
g(v'_{2}+(-\alpha\omega_{i})_{0 \leq i <f}v'_{1}) -(v'_{2}+(-\alpha\omega_{i})_{0 \leq i <f}v'_{1})&= ((\varphi^{f})^{\frac{\psi_{1}(g)}{f}}(-\alpha\omega_{i})_{0 \leq i <f}-(-\alpha\omega_{i})_{0 \leq i <f})v'_{1}\\
&=-\frac{\psi_{1}(g)}{f}\alpha \cdot v'_{1}
\end{align*}
\end{proof}

\begin{prop}\label{prop:constrain}
The element in $\Hk{1}{\mathbf{B}_{e,L}}$ determined by the following exact sequence 
$$0 \to v'_{1}\mathbf{B}_{e,L} \to \mathbf{X}_{\mathrm{st}}(D') \to v'_{2}\mathbf{B}_{e,L} \to 0.$$ 
comes from $\Hk{1}{L}$. Moreover it is of the form $-\frac{\alpha}{f} \psi_{1}$.
\end{prop}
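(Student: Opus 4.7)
The plan is to derive the statement directly from the previous two propositions by exhibiting a morphism of short exact sequences relating the Galois extension $V$ computed just above to the $\mathbf{B}_{e,L}$-extension considered here.

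First I would observe that the pair $\{v'_1,\,\tilde{v}'_2\}$, where $\tilde{v}'_2 := v'_2 + (-\alpha\omega_i)_{0 \leq i < f}\,v'_1$, serves simultaneously as an $L$-basis of $V$ (by the previous proposition) and as a $\mathbf{B}_{e,L}$-basis of $\Xst(D')$ (by the proposition before it). Together with the identifications $L\cdot v'_j \subset \mathbf{B}_{e,L}\cdot v'_j$ for $j=1,2$, this yields a commutative diagram of short exact sequences of continuous $\GK$-modules
\begin{equation*}
\xymatrix{
0 \ar[r] & L\cdot v'_1 \ar[r]\ar[d] & V \ar[r]\ar[d] & L\cdot v'_2 \ar[r]\ar[d] & 0 \\
0 \ar[r] & \mathbf{B}_{e,L}\cdot v'_1 \ar[r] & \Xst(D') \ar[r] & \mathbf{B}_{e,L}\cdot v'_2 \ar[r] & 0
}
\end{equation*}
in which the vertical maps are the obvious inclusions induced by $L\hookrightarrow\mathbf{B}_{e,L}$.

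Second, functoriality of the connecting homomorphism in continuous Galois cohomology implies that the extension class of the top row is sent to that of the bottom row under the natural map $\Hk{1}{L} \to \Hk{1}{\mathbf{B}_{e,L}}$. By the previous proposition the top class equals $-\tfrac{\alpha}{f}\psi_1$, so the bottom class is the image of $-\tfrac{\alpha}{f}\psi_1 \in \Hk{1}{L}$, which is exactly what the proposition asserts.

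Equivalently, one may read off the cocycle representing the bottom extension by hand: lifting the generator $v'_2$ of the quotient $\mathbf{B}_{e,L}\cdot v'_2$ to $\tilde{v}'_2\in\Xst(D')$, the extension class is represented by $g\mapsto g(\tilde{v}'_2)-\tilde{v}'_2$, a calculation already carried out in the proof of the preceding proposition, which yields $-\tfrac{\alpha\psi_1(g)}{f}\,v'_1$. The point is that this cocycle actually takes values in the subspace $L\cdot v'_1 \subset \mathbf{B}_{e,L}\cdot v'_1$, so the class lies in the image of $\Hk{1}{L}$ and equals $-\tfrac{\alpha}{f}\psi_1$. There is no real obstacle here: both ingredients (the explicit $\mathbf{B}_{e,L}$-basis of $\Xst(D')$ and the value of the cocycle) are already in place, and the only thing to verify is the commutativity of the diagram above, which is immediate from the common choice of basis.
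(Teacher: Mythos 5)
Your proof is correct and follows essentially the same path as the paper: the same commutative diagram of short exact sequences relating $V$ to $\Xst(D')$, with the class of the bottom row obtained from the top one by functoriality. The paper additionally cites Lemma~\ref{lem:indepofwt1} to note that the vertical maps $\Hk{k}{L}\to\Hk{k}{\mathbf{B}_{e,L}}$ are in fact isomorphisms (so the identification with $-\tfrac{\alpha}{f}\psi_1$ is unambiguous and can be transported later), but for the statement as written your argument is complete.
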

\begin{proof}
We have the following commutative digram of exact sequences:
$$\xymatrix
{ 0 \ar[r] &  v'_{1}L  \ar[r] \ar[d] & V \ar[r] \ar[d] &  v'_{2}L \ar[d] \ar[r] &0\\
0 \ar[r] &v'_{1}\mathbf{B}_{e,L} \ar[r]&\mathbf{X}_{\mathrm{st}}(D')  \ar[r]&v_{2}'\mathbf{B}_{e,L} \ar[r]& 0.}$$
It induces 
$$\xymatrix
{ \Hk{0}{L}  \ar[r] \ar[d] & \Hk{1}{L} \ar[d] \\
\Hk{0}{\mathbf{B}_{e,L}} \ar[r]&\Hk{1}{\mathbf{B}_{e,L}}.}$$
By Lemma \ref{lem:indepofwt1}, all the columns are isomorphisms. So the proposition follows from the above proposition.
\end{proof}

\section{$\mathscr{L}$-invariants and logarithmic derivatives of eigenvalues of Frobenius} ~\vskip 1pt

In this section we prove Theorem \ref{thm:main}. 
Since the statement is purely local and only related to the derivative of the first order, We only need to consider the case $S=L[x]/(x^{2})$.  

Let $V$ be a free $S$-module of rank $2$ with continuous $\mathscr{G}_{K}$-action. Let $V_{0}$ be $V/xV$ which is a two dimensional continuous $\mathscr{G}_{K}$-representation over $L$. Suppose that $\mathbf{D}_{\mathrm{st}}(V_{0})$ is isomorphic to $(D_{\alpha},\mathrm{Fil}_{\mathscr{L},k})$ for some $\alpha \in L^{\times}$ ,$\mathscr{L} \in L\otimes_{\mathbf{Q}_{p}} K$ and $k \in \oplus_{\sigma}\mathbf{Z}$. Suppose that $(V\otimes_{S}(S\otimes_{\mathbf{Q}_{p}}\mathbf{B}_{\mathrm{cris}})^{\varphi^{f}=\beta})^{\mathscr{G}_{K}}$ is a free $S\otimes_{\mathbf{Q}_{p}}K_{0}$-module of rank $1$ for some some $\beta \in S$.

Denote $\mathrm{End}_{L}(V_{0})$ by $U$. Consider the exact sequence of $\mathscr{G}_{K}$-representations over $L$:
\begin{equation}
0 \to U \to \mathrm{Hom}_{L}(V_{0},V) \to U \to 0 \label{eq:exact}
\end{equation}
 Then we have the connecting map 
$$\delta: U \to \Hk{1}{U}.$$ 
Denote $\mathrm{End}_{L}^{0}(V_{0})$ by $W$. The representation $U$ has the canonical decomposition
$$U = W \oplus L\cdot\mathrm{Id}_{V_{0}}.$$
We identify the trivial representation $L$ with $L\cdot\mathrm{Id}_{V_{0}}$ from now one. Denote the projection of $U$ to $L$ by $\iota$ and the projection of $U$ to $W$ by $\iota_{0}$. Denote the $\mathscr{G}_{K}$-representation $\mathrm{Hom}_{L}(V_{0},V)$ by $\tilde{W}$.

\begin{prop}
The cohomology class $\iota(\delta(\mathrm{Id}_{V_{0}}))$ in $\Hk{1}{L}$ is the continuous homomorphism $\frac{d}{2dx}(\log\circ\det)$ from $\mathscr{G}_{K}$ to $L$.
\end{prop}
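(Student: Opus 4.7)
The plan is to represent the connecting homomorphism $\delta$ by an explicit cocycle and then identify its $L\cdot\mathrm{Id}_{V_0}$-component with the logarithmic derivative of the determinant character.

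The short exact sequence (\ref{eq:exact}) arises from the short exact sequence of $L$-vector spaces with $\mathscr{G}_K$-action
$$0 \to V_0 \stackrel{\cdot x}{\to} V \to V_0 \to 0$$
(well-defined because $x^2 = 0$ in $S$), by applying $\mathrm{Hom}_L(V_0,-)$. The standard recipe for the connecting map thus reads: given $\phi \in H^0(U)$, choose any $L$-linear lift $\tilde\phi \in \mathrm{Hom}_L(V_0, V)$, and $\delta(\phi)$ is represented by the cocycle $g \mapsto \tfrac{1}{x}(g\tilde\phi - \tilde\phi) \in U$, where the division by $x$ comes from the identification $xV \simeq V_0$ via $x\tilde v \leftrightarrow \bar v$.

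To compute $\delta(\mathrm{Id}_{V_0})$, I would fix an $L$-basis $\{e_1,e_2\}$ of $V_0$ together with an $S$-basis $\{\tilde e_1, \tilde e_2\}$ of $V$ lifting it, and take $\tilde\phi: e_i \mapsto \tilde e_i$ as the lift of $\mathrm{Id}_{V_0}$. Writing the matrix of $g$ on $V$ as $A(g) \in \GL_2(S)$ and decomposing $A(g) = a(g) + x B(g)$ with $a(g) \in \GL_2(L)$ the matrix on $V_0$ and $B(g) \in M_2(L)$, a direct calculation using $a(g)a(g^{-1}) = I$ gives
$$(g\tilde\phi - \tilde\phi)(e_i) = \sum_k \big(A(g)a(g^{-1}) - I\big)_{ki}\tilde e_k = x\sum_k (B(g)a(g)^{-1})_{ki}\tilde e_k.$$
Hence $\delta(\mathrm{Id}_{V_0})(g) \in U$ is the endomorphism of $V_0$ with matrix $B(g)a(g)^{-1}$.

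Since the decomposition $U = W \oplus L\cdot\mathrm{Id}_{V_0}$ splits trace-zero endomorphisms off multiples of the identity on the rank-two module $V_0$, the projection $\iota$ corresponds to $M \mapsto \tfrac{1}{2}\mathrm{tr}(M)$, so $\iota(\delta(\mathrm{Id}_{V_0}))(g) = \tfrac{1}{2}\mathrm{tr}(B(g)a(g)^{-1})$. Finally, working in $S$ with $x^2 = 0$,
$$\det A(g) = \det a(g)\cdot\det\bigl(I + x a(g)^{-1}B(g)\bigr) = \det a(g)\,\bigl(1 + x\,\mathrm{tr}(a(g)^{-1}B(g))\bigr),$$
whence $\log\det A(g) = \log\det a(g) + x\,\mathrm{tr}(B(g)a(g)^{-1})$ by cyclicity of the trace. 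The coefficient of $x$ is exactly $\tfrac{d}{dx}(\log\circ\det)(g)$, which yields $\iota(\delta(\mathrm{Id}_{V_0})) = \tfrac{d}{2dx}(\log\circ\det)$, as required. The only subtlety is bookkeeping the sign conventions for the Galois action $(g\cdot\tilde\phi)(v) = g\tilde\phi(g^{-1}v)$ on $\mathrm{Hom}_L(V_0, V)$ and the identification $xV \simeq V_0$; once these are fixed consistently, the rest is a mechanical $2\times 2$ matrix computation.
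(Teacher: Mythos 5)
Your proof is correct and follows essentially the same route as the paper: lift $\mathrm{Id}_{V_0}$ via a chosen basis, compute the cocycle as a $2\times 2$ matrix, project to the scalar part by $\tfrac{1}{2}\mathrm{tr}$, and compare with the first-order expansion of $\log\det$ over $S=L[x]/(x^2)$. The only cosmetic difference is the parametrization of the Galois matrix — you write it additively as $a(g)+xB(g)$ (so the cocycle comes out as $B(g)a(g)^{-1}$), whereas the paper factors it multiplicatively as $(1+B_gx)A_g$ (giving $B_g$ directly); these agree since $B(g)a(g)^{-1}=B_gA_gA_g^{-1}=B_g$.
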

\begin{proof}
Fix a basis $\{v_{1}, v_{2}\}$ of $V$. For any $g$ in $\mathscr{G}_{K}$, denote the transformation matrix of $g$ on $V$ by $(1+B_{g}x)A_{g}$ where $A_{g} \in \mathrm{GL}(2, L)$ and $B_{g} \in \mathrm{M}(2, L)$ . Then the transformation matrix of $g$ on $V_{0}$ is just $A_{g}$. The cohomology class $\iota(\delta(\mathrm{Id}_{V_{0}}))$ is just 
$$\frac{1}{2}\mathrm{tr}(((1+B_{g}x)A_{g})A_{g}^{-1}-1)\mathrm{Id}_{V_{0}}=\frac{1}{2}\mathrm{tr}(B_{g})x\mathrm{Id}_{V_{0}}.$$

On the other hand, we have 
\begin{eqnarray*}
\frac{1}{2}\log\circ\det((1+B_{g}x)A_{g}) &=& \frac{1}{2}\log\circ\det((1+B_{g}x)) + \frac{1}{2}\log\circ\det(A_{g}) \\
&=&\mathrm{tr}(B_{g})x + \frac{1}{2}\log\circ\det(A_{g})
\end{eqnarray*}
So the equality $\iota(\delta(\mathrm{Id}_{V_{0}})) = \frac{d}{2dx}(\log\circ\det)$ holds.
\end{proof}
Suppose the logarithm of the determinate of the represention $V$ is of the form $\delta(x)\psi_{1}+\kappa(x)\psi_{2}$. Then $\iota(\delta(\mathrm{Id}_{V_{0}}))=\frac{1}{2}(\delta'\psi_{1}+\kappa'\psi_{2})$ where $\kappa'$(resp. $\delta'$) is the derivative of $\kappa(x)$(resp. $\delta(x)$) at the point zero.

Let $\mathbf{D}_{\mathrm{cris}}(V_{0})$ (resp. $\mathbf{D}_{\mathrm{cris}}(V)$) be the $L\otimes_{\mathbf{Q}_{p}}K_{0}$-module $(V_{0}\otimes_{\mathbf{Q}_{p}}\mathbf{B}_{\mathrm{cris}})^{\mathscr{G}_{K}}$(resp. $(V\otimes_{\mathbf{Q}_{p}}\mathbf{B}_{\mathrm{cris}})^{\mathscr{G}_{K}}$).
\begin{prop}\label{prop:cris}
The $S\otimes_{\mathbf{Q}_{p}}K_{0}$-module $\mathbf{D}_{\mathrm{cris}}(V)$ is $(V\otimes_{S}(S\otimes_{\mathbf{Q}_{p}}\mathbf{B}_{\mathrm{cris}})^{\varphi^{f}=\beta})^{\mathscr{G}_{K}}$ . The modulo $x$ map from $\mathbf{D}_{\mathrm{cris}}(V)$ to $\mathbf{D}_{\mathrm{cris}}(V_{0})$ is surjective.
\end{prop}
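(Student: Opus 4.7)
The plan is to identify $\mathbf{D}_{\mathrm{cris}}(V)$ directly with the module $M:=(V\otimes_{S}(S\otimes_{\mathbf{Q}_{p}}\mathbf{B}_{\mathrm{cris}})^{\varphi^{f}=\beta})^{\mathscr{G}_{K}}$ furnished by the hypothesis, rather than computing $\mathbf{D}_{\mathrm{cris}}(V)$ from scratch. Since $V$ is free of rank two over $S$ and $\varphi^{f}$ acts only on the $\mathbf{B}_{\mathrm{cris}}$-factor, expanding an element of $V\otimes_{\mathbf{Q}_{p}}\mathbf{B}_{\mathrm{cris}}$ in an $S$-basis of $V$ yields the equality $V\otimes_{S}(S\otimes_{\mathbf{Q}_{p}}\mathbf{B}_{\mathrm{cris}})^{\varphi^{f}=\beta}=(V\otimes_{\mathbf{Q}_{p}}\mathbf{B}_{\mathrm{cris}})^{\varphi^{f}=\beta}$, so the inclusion $M\subseteq\mathbf{D}_{\mathrm{cris}}(V)$ is tautological. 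The task then reduces to proving the reverse inclusion together with the surjectivity of the reduction map modulo $x$.

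The crucial preliminary step is to identify $\bar\beta:=\beta\bmod x\in L$. Because $V_{0}$ is of monodromy type, $\mathbf{D}_{\mathrm{cris}}(V_{0})=\mathbf{D}_{\mathrm{st}}(V_{0})^{N=0}$ is the free rank one $L\otimes_{\mathbf{Q}_{p}}K_{0}$-submodule $(L\otimes_{\mathbf{Q}_{p}}K_{0})\cdot e_{1}$ of $D_{\alpha}$, and a direct computation using that $\varphi$ cyclically permutes the factors of $L\otimes_{\mathbf{Q}_{p}}K_{0}$ shows that $\varphi^{f}$ acts on it by the scalar $\alpha\in L$. The hypothesis that $M$ is free of rank one over $S\otimes_{\mathbf{Q}_{p}}K_{0}$, combined with Nakayama for the Artinian ring $S=L[x]/(x^{2})$, forces $M\otimes_{S}L$ to be a nonzero $L\otimes_{\mathbf{Q}_{p}}K_{0}$-submodule of $\mathbf{D}_{\mathrm{cris}}(V_{0})^{\varphi^{f}=\bar\beta}$. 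Since $\alpha-\bar\beta$ acts invertibly on $L\otimes_{\mathbf{Q}_{p}}K_{0}$ unless $\bar\beta=\alpha$, we conclude $\bar\beta=\alpha$; comparing ranks over $L\otimes_{\mathbf{Q}_{p}}K_{0}$ then shows that reduction modulo $x$ induces a surjection $M\twoheadrightarrow\mathbf{D}_{\mathrm{cris}}(V_{0})$.

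For the remaining inclusion, take $w\in\mathbf{D}_{\mathrm{cris}}(V)$ and use the surjectivity just established to choose $w'\in M$ with $w\equiv w'\pmod x$. Applying the Galois-invariants functor to the short exact sequence
\[
0\to V_{0}\otimes_{\mathbf{Q}_{p}}\mathbf{B}_{\mathrm{cris}}\xrightarrow{\;x\;}V\otimes_{\mathbf{Q}_{p}}\mathbf{B}_{\mathrm{cris}}\to V_{0}\otimes_{\mathbf{Q}_{p}}\mathbf{B}_{\mathrm{cris}}\to 0
\]
coming from the $S$-module structure, the Galois-invariant element $w-w'$, which vanishes modulo $x$, must be of the form $x\bar u$ with $\bar u\in\mathbf{D}_{\mathrm{cris}}(V_{0})$. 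Lifting $\bar u$ through $M\twoheadrightarrow\mathbf{D}_{\mathrm{cris}}(V_{0})$ to some $u'\in M$ and using $x^{2}=0$ in $S$, we obtain $xu'=x\bar u$ inside $V\otimes_{\mathbf{Q}_{p}}\mathbf{B}_{\mathrm{cris}}$, so that $w=w'+xu'\in M$. The second assertion of the proposition then follows at once from the identification $\mathbf{D}_{\mathrm{cris}}(V)=M$ together with the surjectivity $M\twoheadrightarrow\mathbf{D}_{\mathrm{cris}}(V_{0})$ already obtained. The main point requiring care is the correct application of Nakayama and the bookkeeping between the $S\otimes_{\mathbf{Q}_{p}}K_{0}$-module structure and the interaction of multiplication by $x$ with Galois invariants; beyond that, the argument is essentially a short diagram chase leveraging $x^{2}=0$.
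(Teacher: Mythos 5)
Your proof is correct, but it takes a genuinely different (and longer) route than the paper's. The paper's proof is a three-line dimension count: applying the left-exact functor $\mathbf{D}_{\mathrm{cris}}$ to the short exact sequence $0\to V_0\xrightarrow{\,x\,}V\to V_0\to 0$ gives
\begin{equation*}
0\to\mathbf{D}_{\mathrm{cris}}(V_0)\to\mathbf{D}_{\mathrm{cris}}(V)\to\mathbf{D}_{\mathrm{cris}}(V_0),
\end{equation*}
so $\dim_L\mathbf{D}_{\mathrm{cris}}(V)\le 2f$; since $M$ is free of rank one over $S\otimes_{\mathbf{Q}_p}K_0$ and sits inside $\mathbf{D}_{\mathrm{cris}}(V)$, it already has $L$-dimension $2f$, forcing $M=\mathbf{D}_{\mathrm{cris}}(V)$ and the right-hand surjectivity simultaneously. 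You instead work on the level of $V\otimes\mathbf{B}_{\mathrm{cris}}$: you first extract $\bar\beta=\alpha$ (which the paper states separately as the Corollary immediately after), then get the surjection $M\twoheadrightarrow\mathbf{D}_{\mathrm{cris}}(V_0)$ by a rank comparison, and finally chase the reverse inclusion $\mathbf{D}_{\mathrm{cris}}(V)\subseteq M$ by hand using $x^2=0$. That last chase is clean and correct. The paper's version is more economical; yours produces the Corollary as a byproduct and shows more explicitly where the freeness hypothesis on $M$ enters.

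One step deserves care. You assert that Nakayama makes $M\otimes_S L$ a \emph{submodule} of $\mathbf{D}_{\mathrm{cris}}(V_0)^{\varphi^f=\bar\beta}$; that is, you take injectivity of the reduction map $M/xM\to V_0\otimes\mathbf{B}_{\mathrm{cris}}$ for granted. The kernel of $M\to\mathbf{D}_{\mathrm{cris}}(V_0)$ is $M\cap x(V\otimes\mathbf{B}_{\mathrm{cris}})$, which contains $xM$ but is not a priori equal to it. What Nakayama together with $x^2=0$ actually gives you directly is that the image is nonzero: if the image were zero, then $M\subseteq x(V\otimes\mathbf{B}_{\mathrm{cris}})$ would force $xM\subseteq x^2(V\otimes\mathbf{B}_{\mathrm{cris}})=0$, contradicting freeness of $M$. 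That already yields $\bar\beta=\alpha$. The injectivity you want, and hence the surjectivity by rank comparison, then comes from the $L$-dimension count: the kernel embeds, after dividing by $x$, into $\mathbf{D}_{\mathrm{cris}}(V_0)^{\varphi^f=\bar\beta}\subseteq\mathbf{D}_{\mathrm{cris}}(V_0)$ and so has $L$-dimension at most $f$, while $xM$ has $L$-dimension exactly $f$ and is contained in the kernel, forcing equality. So the conclusion stands, but the logical order should be: nonvanishing of the image, then $\bar\beta=\alpha$, then the dimension count that simultaneously delivers injectivity and surjectivity — rather than assuming injectivity to carry out the rank comparison.
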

\begin{proof}
Consider the exact sequence of $L\otimes_{\mathbf{Q}_{p}}K_{0}$-modules:
$$0 \to V_{0} \to V \to V_{0} \to 0$$
where the map from $V_{0}$ to $V$ is just multiplication by $x$ and the map $V$ to $V_{0}$ is just modulo $x$. It induces the following exact sequence:
$$0 \to \mathbf{D}_{\mathrm{cris}}(V_{0}) \to \mathbf{D}_{\mathrm{cris}}(V) \to \mathbf{D}_{\mathrm{cris}}(V_{0}).$$
The $L\otimes_{\mathbf{Q}_{p}}K_{0}$-module $\mathbf{D}_{\mathrm{cris}}(V_{0})$ is free. It is spanned by $e_{1}$ in $\mathbf{D}_{\mathrm{st}}(V_{0})$. One the other hand the free $S\otimes_{\mathbf{Q}_{p}}K_{0}$-module $(V\otimes_{S}(S\otimes_{\mathbf{Q}_{p}}\mathbf{B}_{\mathrm{cris}})^{\varphi^{f}=\beta})^{\mathscr{G}_{K}}$ of rank $1$ is contained $\mathbf{D}_{\mathrm{cris}}(V)$. Then the proposition follows from comparing the dimensions as $L$-vector spaces.
\end{proof}
\begin{cor}
The element $\beta$ in $S$ is just $\alpha$ modulo $x$.
\end{cor}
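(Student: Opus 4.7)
The plan is to exploit Proposition \ref{prop:cris}, which already identifies $\mathbf{D}_{\mathrm{cris}}(V)$ with the rank one $S \otimes_{\mathbf{Q}_p} K_0$-module $(V \otimes_{S}(S \otimes_{\mathbf{Q}_p}\mathbf{B}_{\mathrm{cris}})^{\varphi^f = \beta})^{\mathscr{G}_K}$ and tells us that reduction modulo $x$ gives a surjection $\mathbf{D}_{\mathrm{cris}}(V) \twoheadrightarrow \mathbf{D}_{\mathrm{cris}}(V_0)$. The strategy is then just to compare the $\varphi^f$-eigenvalue of a carefully chosen generator on both sides of this reduction map.

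First I would pick a generator $\tilde{e}$ of $\mathbf{D}_{\mathrm{cris}}(V)$ as a free $S \otimes_{\mathbf{Q}_p} K_0$-module of rank $1$, chosen so that its image modulo $x$ in $\mathbf{D}_{\mathrm{cris}}(V_0)$ is the vector $e_1$ from the model $(D_\alpha, \mathrm{Fil}_{\mathscr{L},k})$. This is legitimate: by Proposition \ref{prop:cris} some generator maps modulo $x$ to a generator of the free rank one $L \otimes_{\mathbf{Q}_p} K_0$-module $\mathbf{D}_{\mathrm{cris}}(V_0) = (L \otimes_{\mathbf{Q}_p} K_0) \cdot e_1$, and we can adjust by a unit in $(S \otimes_{\mathbf{Q}_p} K_0)^\times$ (which lifts any unit in $(L \otimes_{\mathbf{Q}_p} K_0)^\times$ since $S = L[x]/(x^2)$) so that the image is exactly $e_1$.

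Next, by the very definition of $\beta$, the generator $\tilde{e}$ satisfies $\varphi^f(\tilde{e}) = \beta \tilde{e}$ inside $V \otimes_{\mathbf{Q}_p} \mathbf{B}_{\mathrm{cris}}$. Reducing this identity modulo $x$ gives $\varphi^f(e_1) = (\beta \bmod x) \cdot e_1$ in $V_0 \otimes_{\mathbf{Q}_p} \mathbf{B}_{\mathrm{cris}}$. On the other hand, from the explicit description of $(D_\alpha, \mathrm{Fil}_{\mathscr{L},k})$ in Section 4 we have $\varphi(e_1) = (\alpha,1,\ldots,1)\, e_1$, and since $\varphi$ is semilinear and cyclically permutes the coordinates of $L \otimes_{\mathbf{Q}_p} K_0 \cong \prod_{i=0}^{f-1} L$, iterating $f$ times yields
\begin{equation*}
\varphi^f(e_1) \;=\; \Bigl(\prod_{i=0}^{f-1}\varphi^{i}\bigl((\alpha,1,\ldots,1)\bigr)\Bigr) e_1 \;=\; (\alpha, \alpha, \ldots, \alpha)\, e_1 \;=\; \alpha\, e_1,
\end{equation*}
where in each coordinate exactly one of the $f$ cyclic shifts contributes the factor $\alpha$. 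Comparing the two expressions for $\varphi^f(e_1)$ and using that $e_1$ generates a free module forces $\beta \bmod x = \alpha$.

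The argument is essentially bookkeeping; the only point requiring care is the lifting step, but this is immediate because $S \otimes_{\mathbf{Q}_p} K_0$ is local along the fiber over $L \otimes_{\mathbf{Q}_p} K_0$ (or more concretely, because any element of $L \otimes_{\mathbf{Q}_p} K_0$ lifts tautologically to $S \otimes_{\mathbf{Q}_p} K_0$ via the inclusion $L \hookrightarrow S$). No obstruction is expected, and the computation of $\varphi^f(e_1) = \alpha e_1$ is the same identity implicit in the definition of the Newton number $t_N$ used throughout Section 4.
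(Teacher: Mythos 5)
Your argument is correct and follows the paper's own route: invoke Proposition \ref{prop:cris} to get a surjection $\mathbf{D}_{\mathrm{cris}}(V)\twoheadrightarrow \mathbf{D}_{\mathrm{cris}}(V_0)$ with $\mathbf{D}_{\mathrm{cris}}(V)$ identified with the $\varphi^f=\beta$ eigenspace, lift $e_1$ to a generator, and read off the $\varphi^f$-eigenvalue on both sides, using $\varphi^f(e_1)=\alpha e_1$ in the model $(D_\alpha,\mathrm{Fil}_{\mathscr{L},k})$. The paper compresses this to the single remark that $\beta\bmod x$ is the eigenvalue of $\varphi^f$ on $e_1$; you have simply spelled out the lifting step and the $\varphi^f$-computation that the paper leaves implicit.
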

\begin{proof}
By the above proposition and its proof, the element $\beta$ modulo $x$ is the eigenvalue of $\varphi^{f}$ on $e_{1}$. 
\end{proof}

\begin{cor}\label{cor:extension}
There exists a lifting $v_{0}$ of the identity map on $\mathrm{End}(\mathbf{D}_{\mathrm{cris}}(V_{0}))$ in the $(\varphi,N)$-module $\mathrm{Hom}(\mathbf{D}_{\mathrm{cris}}(V_{0}), \mathbf{D}_{\mathrm{cris}}(V))$ such that under the basis $\{v_{0},xv_{0}\}$ the $\varphi$-action is given by ${\scriptsize{\left ( \!\! \begin{array}{cc} 1 & (\frac{\beta'}{\alpha},0,\dots,0) \\ 0  & 1 \end{array}\!\! \right ) }}$.
 
\end{cor}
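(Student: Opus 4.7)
The plan is to construct $v_{0}$ explicitly and compute its $\varphi$-image by a direct calculation. Proposition~\ref{prop:cris} gives $\mathbf{D}_{\mathrm{cris}}(V)$ as a free $S\otimes_{\mathbf{Q}_{p}}K_{0}$-module of rank one, whose reduction modulo $x$ is $\mathbf{D}_{\mathrm{cris}}(V_{0})$. Using the identification $L\otimes_{\mathbf{Q}_{p}}K_{0}\cong\prod_{i=0}^{f-1}L$ via the embeddings $\iota_{i}$ of Section~6, the generator $e_{1}$ of $\mathbf{D}_{\mathrm{cris}}(V_{0})$ decomposes as $e_{1}=\sum_{i}e_{1,i}$, and the definition of $D_{\alpha}$ yields $\varphi(e_{1,i})=e_{1,i+1}$ for $i\neq f-1$ and $\varphi(e_{1,f-1})=\alpha e_{1,0}$.

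Pick any lift $\tilde{e}_{1,0}\in\mathbf{D}_{\mathrm{cris}}(V)$ of $e_{1,0}$ and inductively set $\tilde{e}_{1,i+1}:=\varphi(\tilde{e}_{1,i})$ for $0\le i\le f-2$. Each $\tilde{e}_{1,i}$ lifts $e_{1,i}$, and $\tilde{e}_{1}:=\sum_{i}\tilde{e}_{1,i}$ generates $\mathbf{D}_{\mathrm{cris}}(V)$ over $S\otimes_{\mathbf{Q}_{p}}K_{0}$. The hypothesis $\tilde{e}_{1}\in(V\otimes_{\mathbf{Q}_{p}}\mathbf{B}_{\mathrm{cris}})^{\varphi^{f}=\beta}$ gives $\varphi^{f}(\tilde{e}_{1,0})=\beta\tilde{e}_{1,0}$, and by construction this coincides with $\varphi(\tilde{e}_{1,f-1})$. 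Define $v_{0}$ to be the $L\otimes_{\mathbf{Q}_{p}}K_{0}$-linear map with $v_{0}(e_{1,i})=\tilde{e}_{1,i}$; it reduces to $\mathrm{Id}$ modulo $x$.

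Next, apply the formula $(\varphi v_{0})(e)=\varphi(v_{0}(\varphi^{-1}e))$ together with $\varphi^{-1}(e_{1,0})=\alpha^{-1}e_{1,f-1}$ and $\varphi^{-1}(e_{1,i})=e_{1,i-1}$ for $i\neq 0$. This yields $(\varphi v_{0})(e_{1,0})=(\beta/\alpha)\tilde{e}_{1,0}$ while $(\varphi v_{0})(e_{1,i})=\tilde{e}_{1,i}$ for $i\neq 0$. The preceding corollary gives $\beta=\alpha+\beta'x$, hence $\beta/\alpha=1+(\beta'/\alpha)x$ in $S$; reassembling, $\varphi(v_{0})=v_{0}+(\beta'/\alpha,0,\ldots,0)\cdot xv_{0}$. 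The $N$-action vanishes on both crystalline modules, hence on the Hom module. Reading off the action in the basis $\{v_{0},xv_{0}\}$ then gives the stated matrix.

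No deep obstacle is expected; the computation is essentially bookkeeping. The only care needed is to track the direction of the Frobenius shift on $L\otimes_{\mathbf{Q}_{p}}K_{0}\cong\prod L$ so that the nonzero off-diagonal entry sits in the coordinate where $\omega_{0}$ from Section~6 satisfies $\varphi^{f}(\omega_{0})-\omega_{0}=1$; this is precisely the coordinate that will feed into Proposition~\ref{prop:constrain} when computing the extension class in the proof of the main theorem.
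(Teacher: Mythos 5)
Your construction is correct and is, upon unwinding, the same as the paper's one-line proof (the paper takes an arbitrary lift $v'_{0}$ and sets $v_{0}=\sum_{i=0}^{f-1}\varphi^{i}\bigl((1,0,\dots,0)v'_{0}\bigr)$, which evaluated on the generator coincides with defining $\tilde{e}_{1,i}=\varphi^{i}(\tilde{e}_{1,0})$ and taking $v_{0}\colon e_{1,i}\mapsto\tilde{e}_{1,i}$ as you do). You simply spell out at the level of $\mathbf{D}_{\mathrm{cris}}(V)$ what the paper packages at the level of the $\mathrm{Hom}$-module, and the computation of $\varphi(v_{0})$ via $\varphi^{-1}(e_{1,0})=\alpha^{-1}e_{1,f-1}$ and $\varphi^{f}\big\vert_{\mathbf{D}_{\mathrm{cris}}(V)}=\beta$ is the intended one.
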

\begin{proof}
Let $v'_{0}$ be a lifting. Then we can take $v_{0}$ as $\sum_{i=0}^{f-1}\varphi^{i}((1,0,\cdots,0)v'_{0})$.
\end{proof}

Let $D$ be the filtered $(\varphi,N)$-module $\mathbf{D}_{\mathrm{st}}(W)$. By our assumption, it is the same as in section 5 so we can define its sub modules $D_{i}$ ($i = 1,2,3$) as in that section. Since $\mathbf{D}_{\mathrm{cris}}(V_{0})$ (resp. $\mathbf{D}_{\mathrm{cris}}(V)$)  is a submodule of $\mathbf{D}_{\mathrm{st}}(V)$ (resp. $\mathbf{D}_{\mathrm{st}}(V)$),  
We have the following $B_{e,L}$-modules:
\begin{align*}
\overline{\mathbf{W}}_{1,e} &=\mathbf{X}_{\mathrm{st}}(\mathrm{Hom}(\mathbf{D}_{\mathrm{cris}}(V_{0}),\mathbf{D}_{\mathrm{st}}(V))) \\
\overline{\mathbf{W}}_{2,e}&=\mathbf{X}_{\mathrm{st}}(\mathrm{Hom}(\mathbf{D}_{\mathrm{cris}}(V_{0}),\mathbf{D}_{\mathrm{st}}(V)/\mathbf{D}_{\mathrm{cris}}(V)))\\
\overline{\mathbf{W}}_{3,e}&=\mathbf{X}_{\mathrm{st}}(\mathrm{Hom}(\mathbf{D}_{\mathrm{cris}}(V_{0}), \mathbf{D}_{\mathrm{cris}}(V))).
\end{align*}

\begin{proof}[Proof of Theorem \ref{thm:main}]
We have the following commutative diagram of exact sequences of Galois modules:
$$\xymatrix
{ 0 \ar[r] & W\oplus L  \ar[r] \ar[d] & \tilde{W} \ar[r] \ar[d] & W\oplus L \ar[d] \ar[r] &0\\
0 \ar[r] &\mathbf{X}_{\mathrm{st}}(D/D_{1}) \ar[r]&\overline{\mathbf{W}}_{1,e}  \ar[r]&\mathbf{X}_{\mathrm{st}}(D/D_{1}) . &}$$
The image of the identity map on $V_{0}$ maps to the identity map on
$\mathbf{D}_{\mathrm{cris}}(V_{0})\otimes_{K_{0}}\mathbf{B}_{\mathrm{st}}$. Let $M_{1}$ be the image of  $\overline{\mathbf{W}}_{1,e}$ in $\mathbf{X}_{\mathrm{st}}(D/D_{1})$. Then the above diagram induces the following commutative diagram:
$$\xymatrix
{ W\oplus L\cdot \mathrm{Id}_{V_{0}} \ar[d] \ar[r] &\Hk{1}{W\oplus L}\ar[d]\\
\Hk{0}{M_{1}}  \ar[r] &\Hk{1}{\mathbf{X}_{\mathrm{st}}(D/D_{1})} . }$$
 The image of $\Hk{1}{L}$ lies in $\Hk{1}{\mathbf{X}_{\mathrm{st}}(D_{2}/D_{1})}$. We compute the cohomology class $\delta(\mathrm{Id}_{V_{0}})$ in $\Hk{1}{\mathbf{X}_{\mathrm{st}}(D/D_{1})}$ by two ways as follows.

On one hand, consider the following diagram of exact sequences of Galois modules:
$$\xymatrix
{ 0 \ar[r] & \mathbf{X}_{\mathrm{st}}(D_{2}/D_{1})  \ar[r] \ar[d] & \overline{\mathbf{W}}_{3,e} \ar[r] \ar[d] & \mathbf{X}_{\mathrm{st}}(D_{2}/D_{1}) \ar[r] \ar[d]&0\\
 0 \ar[r] & \mathbf{X}_{\mathrm{st}}(D/D_{1})   \ar[r] & \overline{\mathbf{W}}_{1,e}\ar[r] & \mathbf{X}_{\mathrm{st}}(D/D_{1}) & }$$
It induces the following commutative diagram:
$$\xymatrix
{ \Hk{0}{\mathbf{X}_{\mathrm{st}}(D_{2}/D_{1})}  \ar[r]^{\delta} \ar[d] &\Hk{1}{\mathbf{X}_{\mathrm{st}}(D_{2}/D_{1})}\ar [d]\\
 \Hk{0}{M_{1}}\ar[r]^{\delta} & \Hk{1}{\mathbf{X}_{\mathrm{st}}(D/D_{1})} . }$$
In $M_{1}$, the image of $\mathrm{Id}_{V_{0}}$ coincides with the image of $\mathrm{Id}_{\mathbf{D}_{\mathrm{cris}}(V_{0})}$. So the cohomology class $\delta(\mathrm{Id}_{V_{0}})$ in $\Hk{1}{\mathbf{X}_{\mathrm{st}}(D_{2}/D_{1})}$ is the same as the image of the following extension class:
$$0 \to  \mathbf{X}_{\mathrm{st}}(D_{2}/D_{1})  \to  \overline{\mathbf{W}}_{3,e} \to   \mathbf{X}_{\mathrm{st}}(D_{2}/D_{1}) \to 0$$
By Proposition \ref{prop:constrain} and Corollary \ref{cor:extension}, the extension class in $\Hk{1}{\mathbf{X}_{\mathrm{st}}(D_{2}/D_{1})}$ comes from $\Hk{1}{L}$ and is of the form $-\frac{\beta'}{f\alpha}\psi_{1}$.

On the other hand, consider the following diagram of exact sequences of Galois modules:
$$\xymatrix
{ 
 0 \ar[r] & \mathbf{X}_{\mathrm{st}}(D/D_{1})   \ar[r] \ar[d]& \overline{\mathbf{W}}_{1,e}\ar[r] \ar[d]& \mathbf{X}_{\mathrm{st}}(D/D_{1}) \ar[d] & \\
0 \ar[r] &\mathbf{X}_{\mathrm{st}}(D/D_{2}) \ar[r] &\overline{\mathbf{W}}_{2,e}  \ar[r]&\mathbf{X}_{\mathrm{st}}(D/D_{2})  &}$$
Let $M_{2}$ be the image of $\overline{\mathbf{W}}_{2,e}$ in $\mathbf{X}_{\mathrm{st}}(D/D_{2})$. Combined with the diagram in the above paragraph, it induces
$$\xymatrix
{ \Hk{0}{\mathbf{X}_{\mathrm{st}}(D_{2}/D_{1})}  \ar[r]^{\delta} \ar[d] &\Hk{1}{\mathbf{X}_{\mathrm{st}}(D_{2}/D_{1})}\ar [d]\\
 \Hk{0}{M_{1}}\ar[r]^{\delta} \ar[d] & \Hk{1}{\mathbf{X}_{\mathrm{st}}(D/D_{1})}\ar[d]\\
  \Hk{0}{M_{2}}\ar[r]^{\delta}  & \Hk{1}{\mathbf{X}_{\mathrm{st}}(D/D_{2})}. }$$
Since both the image of $\delta(\mathrm{Id}_{V_{0}})$ and  $\iota(\delta(\mathrm{Id}_{V_{0}}))$ in $\Hk{1}{\mathbf{X}_{\mathrm{st}}(D/D_{1})}$ lie in $\Hk{1}{\mathbf{X}_{\mathrm{st}}(D_{2}/D_{1})}$, they vanishes in $\Hk{1}{\mathbf{X}_{\mathrm{st}}(D/D_{2})}$. Then so does the image of $\iota_{0}(\delta(\mathrm{Id}_{V_{0}}))$ in $\Hk{1}{\mathbf{X}_{\mathrm{st}}(D/D_{1})}$. By Proposition \ref{prop:cohom} it comes from $\Hk{1}{L}$ and is of the form $\frac{1}{n}\mathrm{tr} (\gamma\mathscr{L})\psi_{1} + \gamma\psi_{2}$ where $\gamma$ is in $L\otimes_{\mathbf{Q}_{p}} K$. Hence the cohomology class $\delta(\mathrm{Id}_{V_{0}})$  in $\Hk{1}{\mathbf{X}_{\mathrm{st}}(D_{2}/D_{1})}$ also equals to
$$\frac{1}{2}(\delta'\psi_{1}+\kappa'\psi_{2})+ \frac{1}{n}\mathrm{tr} (\gamma\mathscr{L})\psi_{1} + \gamma\psi_{2}.$$

Combining the above two paragraphs, we get
$$\frac{1}{2}(\delta'\psi_{1}+\kappa'\psi_{2})+ \frac{1}{n}\mathrm{tr} (\gamma\mathscr{L})\psi_{1} + \gamma\psi_{2}=-\frac{\beta'}{f\alpha}\psi_{1}.$$
The theorem then follows from the identity.
\end{proof}

Similarly, we have a ``degenerated'' formula for the filtered $(\varphi,N)$-module defined in Section 4.
 
\begin{prop}\label{prop:degen}
Let $\mathbf{V}$ be a family of two dimensional Galois representations over $\mathscr{X}$. Let $\log \det \mathbf{V} = \delta \psi_{1}+ \kappa \psi_{2}$  where $\delta \in S$ and $\kappa \in K\otimes_{\mathbf{Q}_{p}}S$. Suppose that $((\mathbf{V}\otimes_{\mathbf{Q}_{p}}\mathbf{B}_{\mathrm{cris}})^{\varphi^{f}=\alpha})^{\mathscr{G}_{K}}$ is a free $S\otimes_{\mathbf{Q}_{p}}K_{0}$-module of rank $1$. Let $x$ be a point in $\mathscr{X}(L)$ such that $V_{x}$ is semi-stable and its associated filtered $(\varphi,N)$-module is isomorphic to $(D_{\alpha,p\alpha}, \mathrm{Fil}_{m, k,\mathscr{L}})$ defined in Section 4. Then the differential forms
$$\mathrm{tr} (\mathscr{L}\cdot d\kappa)$$
vanishes at the point $x$.
\end{prop}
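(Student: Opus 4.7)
The argument will mirror the proof of Theorem \ref{thm:main}; I describe only the adjustments required for the degenerate case. Reduce to $S = L[x]/(x^{2})$, set $V_{0} = V_{x}$, $U = \mathrm{End}_{L}(V_{0})$, $W = \mathrm{End}^{0}_{L}(V_{0})$, and form the exact sequence $0 \to U \to \mathrm{Hom}_{L}(V_{0}, V) \to U \to 0$ with connecting map $\delta : U \to H^{1}(U)$. The formal computation $\iota(\delta(\mathrm{Id}_{V_{0}})) = \tfrac{1}{2}(\delta'\psi_{1} + \kappa'\psi_{2})$ from Section 7 carries over verbatim. The hypothesis on $((V \otimes \mathbf{B}_{\mathrm{cris}})^{\varphi^{f} = \alpha})^{\mathscr{G}_{K}}$, via the analogs of Proposition \ref{prop:cris} and Corollary \ref{cor:extension}, provides a second computation of $\delta(\mathrm{Id}_{V_{0}})$ in terms of $\alpha'/\alpha$ through Proposition \ref{prop:constrain}.

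The essential structural novelty lies in $D = \mathbf{D}_{\mathrm{st}}(W)$: since $V_{0}$ is now crystalline, $N = 0$ on $D$, and the filtration $D_{1} \subset D_{2} \subset D$ of Section 5 (defined by $N^{i} = 0$) collapses. I replace it with the $\varphi^{f}$-slope filtration: the $\varphi^{f}$-eigenvalues on $V_{0}$ being $\alpha$ and $p^{f}\alpha$, the three rank-one pieces of $W$ have slopes $-1, 0, +1$, and the obvious choices $D_{1} = (L \otimes K_{0})\cdot (e_{1}\otimes e_{2}^{*})$ and $D_{2} = D_{1} \oplus (L \otimes K_{0})\cdot (e_{1}\otimes e_{1}^{*} - e_{2}\otimes e_{2}^{*})$ produce a filtration by $(\varphi, N)$-submodules. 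The admissible filtration on $D_{K}$, still governed by $\mathscr{L}$, yields a Proposition \ref{prop:base}-type basis of the $\mathbf{X}_{\mathrm{st}}$-pieces. The crucial degeneration appears in the analog of Lemma \ref{lem:class}: since there is no $u$-term (the monodromy being trivial), the extension class of the middle graded piece by the lowest one reduces from $(\exp(\mathscr{L})) + (p)$ to $(\exp(\mathscr{L}))$ alone.

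Feeding this into the cohomological argument of Proposition \ref{prop:cohom}, the cup-product constraint from Proposition \ref{prop:calcupprod} becomes simply $\mathrm{tr}(\gamma \mathscr{L}) = 0$ (rather than $\gamma_{1} = \tfrac{1}{n}\mathrm{tr}(\gamma_{2} \mathscr{L})$), leaving the $\psi_{1}$-coefficient unconstrained by $\mathscr{L}$. Matching $\psi_{2}$-components between the two computations of $\delta(\mathrm{Id}_{V_{0}})$ yields $\gamma = -\tfrac{1}{2}\kappa'$, whereupon the cup-product constraint reads $\mathrm{tr}(\mathscr{L} \cdot \kappa') = 0$, i.e.\ the vanishing of $\mathrm{tr}(\mathscr{L} \cdot d\kappa)$ at $x$. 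Matching $\psi_{1}$-components separately determines the $\psi_{1}$-coefficient of the $\iota_{0}$-part, but, crucially, it is no longer linked to $\gamma$, so no analog of the full formula of Theorem \ref{thm:main} emerges.

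The main obstacle is establishing the degenerate analog of Section 5 and Proposition \ref{prop:cohom}: specifically, verifying that the $\varphi^{f}$-slope filtration consists of admissible $(\varphi, N)$-submodules with the expected graded structure, that Lemmas \ref{lem:indepofwt1} and \ref{lem:indepofwt} still provide the cohomological isomorphisms needed to transport the constraint from $H^{1}(L)$ to $H^{1}(\mathbf{X}_{\mathrm{st}}(D_{2}/D_{1}))$ despite the different filtration types, and that the extension class indeed reduces cleanly to $(\exp(\mathscr{L}))$ without a $(p)$-contribution so that Proposition \ref{prop:calcupprod} produces exactly the trace constraint and nothing more.
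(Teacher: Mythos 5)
Your proof plan is essentially identical to the paper's own (terse) argument: drop the $u$-term from the basis of Proposition~\ref{prop:base}, so the extension class of Lemma~\ref{lem:class} becomes $(\exp(\mathscr{L}))$ and the constraint of Proposition~\ref{prop:cohom} becomes $\mathrm{tr}(\gamma\mathscr{L}) = 0$, then run the $\psi_{2}$-matching from the proof of Theorem~\ref{thm:main}; the filtration $D_{1}\subset D_{2}\subset D$ you propose via $\varphi$-slopes is exactly the one the paper keeps implicitly by fixing the same $f_{i}$ with $N=0$. The verifications you flag as the ``main obstacle'' (admissibility of the sub-filtered modules, applicability of Lemmas~\ref{lem:indepofwt1}, \ref{lem:indepofwt}, cleanliness of the cup-product constraint) are precisely those the paper also leaves implicit, and they go through because the $\varphi$-eigenvalues, the filtration, and hence $t_{N}$, $t_{H}$ and the fundamental exact sequence are untouched by setting $N=0$.
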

\begin{proof}
Up to a twist of characters, we can assume that $m= 0$. In this case, Proposition \ref{prop:end} holds for the same filtered $\varphi$-module with $N = 0$. We have similar bases as in Proposition \ref{prop:base} without the $u$-term. Similar to Lemma \ref{lem:class}, in this case the corresponding cohomology class is $(\exp(\mathscr{L}))$. The cohomology property we need as in Proposition \ref{prop:cohom} is that in this case the class $c=a\psi_{1}+\gamma\psi_{2}$ must satisfy $\mathrm{tr} (\gamma\mathscr{L})=0$.     Then the proposition follows from the same calculation in Theorem \ref{thm:main}. 
\end{proof}

\end{document}